\newcolumntype{x}[1]{
>{\centering\arraybackslash}p{#1}}%
\newcommand{\RN}{\mathbb{R}^{n+1}}
\newcommand{\R}{\mathbb{R}}
\newcommand{\Rn}{\mathbb{R}^{n}}
\newcommand{\Ru}{\mathbb{R}^{n+1}_+}
\newcommand{\Cln}{\mathcal{C}l_{n-1}}
\newcommand{\ClN}{\mathcal{C}l_n}
\newcommand{\C}{\mathbb{C}}
\newcommand{\SN}{\mathbb{S}^{n}}
\newcommand{\RP}{\mathbb{R}P^{n}}
\newcommand{\be}{\begin{align*}}
\newcommand{\ee}{\end{align*}}
\newcommand{\wzwo} {\stackrel{\circ}{{W}^1_2}}
\newcommand{\wzwop} {\stackrel{\circ}{{W}^k_p}}
\numberwithin{equation}{section}
\begin{document}

\title{The $\Pi$-operator on Some Conformally Flat Manifolds and Hyperbolic Half Space}

\author[1,2]{Wanqing Cheng*}

\author[1]{John Ryan}

\authormark{W. Cheng, J. Ryan}

\address[1]{\orgdiv{Department of Mathematical Science}, \orgname{University of Arkansas, Fayetteville}, \orgaddress{\state{AR}, \country{U.S.A.}}}

\address[2]{\orgdiv{Department of Physics}, \orgname{University of Arkansas, Fayetteville}, \orgaddress{\state{AR}, \country{U.S.A.}}}
%

\corres{*\email{wcheng@uark.edu}}

\presentaddress{University of Arkansas, Fayetteville, \\ Department of Physics,\\ Fayetteville, AR, 72701, U.S.A.}

\abstract[Abstract]{The $\Pi$-operator, also known as Ahlfors-Beurling transform, plays an important role in solving the existence of locally quasiconformal solutions of Beltrami equations. In this paper, we first construct the $\Pi$-operator on a general Clifford-Hilbert module. This $\Pi$-operator is also an $L^2$ isometry. Further, it can also be used for solving certain Beltrami equations when the Hilbert space is the $L^2$ space of a measure space. Then, we show that this technique can be applied to construct the classical $\Pi$-operator in the complex plane and some other examples on some conformally flat manifolds, which are constructed by $U/\Gamma$, where $U$ is a simply connected subdomain of either $\RN$ or $\SN$, and $\Gamma$ is a Kleinian group acting discontinuously on $U$. The $\Pi$-operators on those manifolds also preserve the isometry property in certain $L^2$ spaces, and their $L^p$ norms are bounded by the $L^p$ norms of the $\Pi$-operators on $\RN$ or $\SN$, depending on where $U$ lies. The applications of the $\Pi$-operator to solutions of the Beltrami equations on those
 conformally flat manifolds are also discussed. At the end, we investigate the $\Pi$-operator theory in the upper-half space with the hyperbolic metric. }

\keywords{$\Pi$-operator, Beltrami equations, Conformally flat manifolds, Spectrum.}

\jnlcitation{\cname{%
\author{W. Cheng},
and \author{J. Ryan}} (\cyear{2020}),
\ctitle{The $\Pi$-operator on Some Conformally Flat Manifolds and Hyperbolic Half Space}, \cjournal{Math Meth Appl Sci}, \cvol{2020;xx:x--x}.}

\maketitle

\section{Introduction}
The $\Pi$-operator is one of the tools used to study smoothness of functions over Sobolev spaces and to solve the Beltrami equations. In one dimensional complex analysis, the Beltrami equation is given by
$\displaystyle \frac{\partial w}{\partial \overline{z}}=\mu \displaystyle \frac{\partial w}{\partial z}$,
where $\mu=\mu(z)$ is a given complex function, and $z\in\mathbb{C}$. It can be transformed to a fixed-point equation $h=q(z)(I+\Pi_\Omega h),$
where
$$\Pi_\Omega h(z)=-\displaystyle\frac{1}{\pi i}\displaystyle\int_\Omega \frac{h(\xi)}{(\xi-z)^2}d\xi_1 d\xi_2$$
is the complex $\Pi$-operator. This singular integral operator acts as an isometry from $L^2(\mathbb{C})$ to $L^2(\mathbb{C})$ with the $L^p$-norm being a long standing conjecture by Iwaniec.
\par
With the help of Clifford algebras, the classical Beltrami equation and $\Pi$-operator with some well known results can be generalized to higher dimensions. Abundant results in Euclidean space have been found. For instance, in \cite{GKS}, G\"{u}rlebeck, K\"{a}hler and Shapiro considered a class of generalizations of the complex one-dimensional $\Pi$-operator in spaces of quaternion-valued functions depending on four real variables. In \cite{GK}, G\"{u}rlebeck and K\"{a}hler provided a hypercomplex generalization of the complex $\Pi$-operator which turns out to have most of the properties of its origin in one dimensional complex analysis. K\"{a}hler studied Beltrami equations in the case of quaternions in \cite{Kahler}, which gave an overview of possible generalizations of complex Beltrami equation and their properties in the quaternionic case. In \cite{Blaya}, the authors studied the $\Pi$-operator in Clifford analysis by using two orthogonal bases of a Euclidean space, which allows to find the expression of the jump of the generalized $\Pi$-operator across the boundary of the domain. The case of the $\Pi$-operator and the Beltrami equation on the unit sphere has also been discussed in \cite{CRK} with most useful properties inherited from the complex $\Pi$-operator. The classical Ahlfors-Beurling inequality has also been generalized to higher dimensions by Martin in \cite{Martin}.
\par
Conformally flat manifolds are manifolds with atlases whose transition maps are M\"{o}bius transformations. They can be parametrized by $U/\Gamma$ where $U$ is a simply connected subdomain of either $\SN$ or $\Rn$ and $\Gamma$ is a Kleinian group acting discontinuously on $U$. Examples of such manifolds treated here include the real projective space $\mathbb{R}P^n$, cylinders and Hopf manifolds $\mathbb{S}^1\times \SN$. More details for these conformally flat manifolds can be found in \cite{KR,KR1}. In the present paper, we will generalize the results in Euclidean space \cite{GK} and on the unit sphere \cite{CRK} to the previous conformally flat manifolds through proper projection maps.
\par
This paper is organized as follows. In Section 2, we briefly introduce the Clifford algebras setting and some integral formulas. Section 3 is devoted to an introduction of $\Pi$-operator in a general Hilbert space. It turns out that this technique can be applied to obtain the results of the $\Pi$-operator in the classical case in the complex plane, and the $\Pi$-operators on some other conformally flat manifolds can also be constructed with this strategy. This is explained in details in the rest of the paper. More specifically, in section 4, we define the real projective space $\RP$ as a quotient space of the $n$-dimensional unit sphere with certain projection map. With the help of this projection map we can induce the Dirac operator, Cauchy transform, some integral formulas and the $\Pi$-operator from $\SN$ to $\RP$. The Beltrami equation on the real projective space is also studied here as an application. In Section 5, we generalize the results in Euclidean space to cylinders and Hopf manifolds. Applications to the Beltrami equations on cylinder and Hopf manifolds are also provided. Section $6$ is devoted to an investigation for the $\Pi$-operator theory on the upper-half space with the hyperbolic metric. Such defined $\Pi$-operator also possesses most properties that it has in one dimensional complex analysis.
\subsection*{Acknowledgements}
This paper is dedicated to Klaus G\"urlebeck on his 65th birthday.
\section{Preliminaries}
\subsection{Clifford analysis in Euclidean space}
Let $\{e_1,\cdots,e_n\}$ be the canonical orthonormal basis of the Euclidean space $\Rn$. The real Clifford algebra $\ClN$ is generated from $\Rn$ by considering the relationship $e_ie_j+e_je_i=-2\delta_{ij}e_0$, where $e_0$ is the identity of $\ClN$ and $\delta_{ij}$ is the usual Kronecker symbol. An arbitrary element of the basis of the Clifford algebra can be written as ${e}_A={e}_{j_1}\cdots {e}_{j_r},$ where $A=\{j_1, \cdots, j_r\}\subset \{1, 2, \cdots, n\}$ and $1\leq j_1< j_2 < \cdots < j_r \leq n$. Hence, for any $a\in \mathcal{C}l_n$, we have $a=\sum_Aa_Ae_A,$ where $a_A\in \mathbb{R}$. The norm of a Clifford number $x$ is defined as
 $\|x\|^2=\sum_{A\subset\{1,\cdots,n\}}x_A^2.$
 If the set $A$ contains $k$ elements, then we call $e_A$ a \emph{k-vector}. Likewise, we call each linear combination of $k$-vectors a $k$-vector. The vector space of all $k$-vectors is denoted by $\Lambda^k\Rn$. Obviously, $\ClN$ is the direct sum of all $\Lambda^k\Rn$ for $k\leq n$. In particular, under the rule of multiplication, each non-zero vector $x\in\Rn$ has a multiplicative inverse $x^{-1}=\frac{-x}{||x||^2}$. see \cite{Br} for more details on Clifford algebras. We also need the following three anti-involutions in Clifford analysis.
\begin{itemize}
\item \textbf{Reversion:}
$
\tilde{a}=\sum_{A} (-1)^{|A|(|A|-1)/2}a_Ae_A,
$
where $|A|$ is the cardinality of $A$. In particular, $\widetilde{e_{j_1}\cdots e_{j_r}}=e_{j_r}\cdots e_{j_1}$. 
\item \textbf{Clifford conjugation:}
$
a^{\dagger}=\sum_{A} (-1)^{|A|(|A|+1)/2}a_Ae_A,
$
satisfying ${e_{j_1}\cdots e_{j_r}}^{\dagger}=(-1)^re_{j_r}\cdots e_{j_1}$. 
\item \textbf{Clifford involution:}
$
\bar{a}=\tilde{a}^{\dagger}=\widetilde{a^{\dagger}}.
$
\end{itemize}
In the rest of this paper, we identify the Euclidean space $\RN$ with the direct sum $\Lambda^0\Rn\oplus\Lambda^1\Rn$ and $\Omega\subset \RN$ is a domain with a sufficiently smooth boundary $\Gamma=\partial \Omega$. Further, we only deal with functions defined in $\Omega$ taking values in $\ClN$. These functions can be written as
\begin{eqnarray*}
f(x)=\sum_{A\subseteq\{ e_1,e_2,...e_n \}}f_A(x)e_A,\quad x\in \Omega.
\end{eqnarray*}
Properties such as continuity, differentiability, integrability, and so on, which are ascribed to $f$ have to be possessed by all components $f_A(x),\ A\subseteq\{ e_1,e_2,...e_n \}$. The spaces $C^k(\Omega,{\mathcal{C}l_n})$ and $L^p(\Omega, {\mathcal{C}l_n}) $ are defined as right Banach modules with the corresponding traditional norms.  In particular, the space $L^2(\Omega,{\mathcal{C}l_n})$ is a right Hilbert module equipped with a ${\mathcal{C}l_n}$-valued sequilinear form
$$
\langle u,v\rangle=\int_\Omega \overline{u(\eta)} v(\eta)\, d\eta.
$$
Furthermore, $W_p^k(\Omega,{\mathcal{C}l_n}), k\in \mathbb{N}\cup\{0\},1\leq p<\infty$ denotes the Sobolev space as the right module of all functionals whose derivatives belong to $L^p(\Omega,{\mathcal{C}l_n})$, with norm
  $$
  \|f\|_{W_p^k(\Omega,{\mathcal{C}l_n})}:=\big(\sum_{A}\sum_{\|\alpha\|\leq k}\|D^\alpha_w f_A\|_{L^p(\Omega,{\mathcal{C}l_n})}^p\big)^{1/p}.
  $$
The closure of the space of test functions $C^\infty_0(\Omega, {\mathcal{C}l_n})$ in the $W_p^k$-norm will be denoted by $\wzwop(\Omega, {\mathcal{C}l_n})$.
\par
The Euclidean Dirac operators $D_x$ and $D_0$ arise as generalizations of the Cauchy-Riemann operator in one dimensional complex analysis and
$
D_x:=\sum_{i=1}^{n}e_i\partial_{x_i},\ D_0:=e_0\partial_{x_0}+\sum_{i=1}^{n}e_i\partial_{x_i}=e_0\partial_{x_0}+D_x.
$
Note $D_x^2=-\Delta_n$, where $\Delta_n$ is the Laplacian in $\Rn$, and $\Delta_{n+1}=D_0\overline{D_0}$. A $\ClN$-valued function $f(x)$ defined on a domain $\Omega$ in $\RN$ is called left monogenic if $D_0f(x)=\sum_{i=0}^{n}e_i\partial_{x_i}f(x)=0.$ Since Clifford multiplication is not commutative in general, there is a similar definition for right monogenic functions.
\par
Let $f \in C^1(\Omega, {\mathcal{C}l_n})$, $G(x,y)=\displaystyle\frac{\overline{x-y}}{\|x-y\|^{n+1}}$ is the fundamental solution of $D_0$ (see \cite{LR}). When considering functions with compact support, $D_0$ has a left and right inverse (called Cauchy transform) $T_{\Omega}$ as follows.
\begin{eqnarray*}
T_\Omega f(x)=\frac{1}{\omega_n}\int_\Omega G(x,y)f(y)dy,
\end{eqnarray*}
where $\omega_{n+1}$  is the area of the $n$-dimensional unit sphere. For more details, see \cite{NW}. Also, there is a non-singular boundary integral operator given by
\begin{eqnarray*}
F_{\partial \Omega}f(x)=\frac{1}{\omega_n}\int_{\partial \Omega}G(x,y)n(y)f(y)d\sigma(y).
\end{eqnarray*}
With the above two integral operators, we have the classical Borel-Pompeiu formula in Clifford analysis as follows.
\begin{theorem} \cite{GK} For $f\in C^1(\Omega,\ClN)\cap C(\overline\Omega)$, we have
\begin{eqnarray*}
f(x)=\frac{1}{\omega_n}\int_{\partial \Omega}G(x,y)n(y)f(y)d\sigma(y)+\frac{1}{\omega_n}\int_\Omega G(x,y)D_0f(y)dy,
\end{eqnarray*}
In particular, if $f\in \wzwo(\Omega,{\mathcal{C}l_n})$, then
\begin{eqnarray*}
f(x)=\frac{1}{\omega_n}\int_\Omega G(x,y)D_0f(y)dy.
\end{eqnarray*}
\end{theorem}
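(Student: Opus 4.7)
My plan is to derive the identity from the Clifford Stokes theorem applied to a domain punctured at $x$, then pass to the limit as the puncture shrinks. The key tool is the Clifford integration-by-parts identity: for any $g, h \in C^1(\overline{U}, \ClN)$ on a smooth bounded domain $U$,
\[
\int_{U} \bigl[(gD_0)(y)\, h(y) + g(y)\, (D_0 h)(y)\bigr]\, dy \;=\; \int_{\partial U} g(y)\, n(y)\, h(y)\, d\sigma(y),
\]
where $gD_0$ denotes the right action of the Dirac operator and $n$ is the outward unit normal. This follows from the componentwise divergence theorem applied to the Clifford-valued field $\sum_i g\, e_i\, h$ together with the relations $e_i e_j + e_j e_i = -2\delta_{ij}$. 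A direct differentiation, using $\sum_i e_i^2 = -(n+1)$ and $(y-x)^2 = -\|y-x\|^2$, shows that the kernel $G(x, y)$ is both left- and right-monogenic in $y$ on $\RN \setminus \{x\}$; in particular $G(x, \cdot)\, D_0 \equiv 0$ away from $x$.

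Next I fix $x \in \Omega$, choose $\epsilon > 0$ small enough that $\overline{B(x, \epsilon)} \subset \Omega$, and set $\Omega_\epsilon := \Omega \setminus \overline{B(x, \epsilon)}$. Applying the Stokes identity on $\Omega_\epsilon$ with $g(\cdot) = G(x, \cdot)$ and $h = f$ annihilates the right-Dirac term, leaving
\[
\int_{\Omega_\epsilon} G(x, y)\, D_0 f(y)\, dy \;=\; \int_{\partial \Omega} G(x, y)\, n(y)\, f(y)\, d\sigma(y) \;+\; \int_{\partial B(x, \epsilon)} G(x, y)\, n(y)\, f(y)\, d\sigma(y).
\]
The decisive calculation is the inner boundary integral as $\epsilon \to 0$. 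Parametrizing $y = x + \epsilon \omega$ with $\omega \in \SN$, direct substitution yields $G(x, y) = \omega/\epsilon^n$ and $d\sigma(y) = \epsilon^n d\sigma(\omega)$, while the unit-vector identity $\omega^2 = -1$ causes the combination $G(x, y)\, n(y)\, d\sigma(y)$ to collapse to a pure surface measure on $\SN$; continuity of $f$ at $x$ then delivers the limit $\omega_n f(x)$. Meanwhile dominated convergence applies to the volume integral, since $\|G(x, y)\|$ has only an integrable $\|y - x\|^{-n}$ singularity in $\RN$, so the left side tends to $\int_\Omega G(x, y)\, D_0 f(y)\, dy$. Rearranging produces the first formula.

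The second assertion is then routine: for $f \in \wzwo(\Omega, \ClN)$, approximate $f$ by $\varphi_k \in C_0^\infty(\Omega, \ClN)$ in the $W^1_2$-norm, apply the first formula to each $\varphi_k$ (whose boundary trace vanishes), and pass to the limit using the $L^2$-boundedness of the Cauchy transform acting on $D_0 \varphi_k \to D_0 f$. The main technical hurdle is the careful bookkeeping of signs and orientations in the inner boundary limit; beyond that, the proof is a standard adaptation of the classical Cauchy integral formula to the Clifford setting.
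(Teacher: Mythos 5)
The paper offers no proof of this theorem---it is imported verbatim from G\"urlebeck--K\"ahler \cite{GK}---so there is nothing internal to compare against; your punctured-domain Stokes argument is the standard derivation and its overall architecture (Clifford integration by parts on $\Omega\setminus\overline{B(x,\epsilon)}$, two-sided monogenicity of the kernel, collapse of the inner sphere integral, density argument for $\wzwo$) is correct. The one substantive issue is that the algebraic identities you invoke belong to the \emph{vector} Dirac operator $\sum_{i=1}^n e_i\partial_{x_i}$ on $\Rn$, whereas the operator in this statement is the paravector Cauchy--Riemann operator $D_0=e_0\partial_{x_0}+\sum_{i=1}^n e_i\partial_{x_i}$ with $e_0$ the identity, acting on $\RN\cong\Lambda^0\Rn\oplus\Lambda^1\Rn$. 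In that convention $(y-x)^2\neq-\|y-x\|^2$, $\sum_{i=0}^n e_i^2=1-n$ rather than $-(n+1)$, and $\omega^2=-1$ fails for unit paravectors (take $\omega=e_0$), so neither your verification that $G(x,y)=\overline{x-y}/\|x-y\|^{n+1}$ is two-sided monogenic nor your sphere computation goes through as written. The fixes are mechanical: monogenicity follows from $\overline{(x-y)}(x-y)=\|x-y\|^2$, the factorization $\Delta_{n+1}=D_0\overline{D_0}$, and the harmonicity of $\|x-y\|^{1-n}$, of which $G$ is (up to a constant) the image under $\overline{D_0}$; and on the sphere $y=x+\epsilon\omega$ one gets $G(x,y)=-\overline{\omega}/\epsilon^{n}$ while the outward normal of the punctured domain is $-\omega$, so the product collapses via $\overline{\omega}\omega=\|\omega\|^2=1$ to $\epsilon^{-n}$ and the inner integral still tends to $\omega_n f(x)$. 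You are also right to flag sign and orientation bookkeeping as the hurdle: with the paper's stated kernel $\overline{x-y}/\|x-y\|^{n+1}$ (rather than $\overline{y-x}/\|y-x\|^{n+1}$) the boundary term acquires an extra minus sign relative to the displayed formula, an inconsistency that lies in the paper's conventions rather than in your argument. With those corrections your proof is sound.
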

\subsection{Clifford analysis on the unit sphere}
Recall that the generalized spherical Dirac operator $D_s$ and its conjugate  on the $n$-dimensional unit sphere $\SN$ are defined as follows:
$D_s=x(\Gamma_0-\frac{n}{2}),\
\overline{D_s}=\overline{x}(\overline{\Gamma_0}-\frac{n}{2}),$
where $\Gamma_0=\sum_{j=1}^{n} e_0e_jL_{0,j}-\sum_{i=1,j>i}^{n} e_ie_jL_{i,j}$, and here the operators $L_{i,j}=x_i\partial_{x_j}-x_j\partial_{x_i}$ are called the angular momentum operators. It is well known that the fundamental solution of $\Gamma_0$ is $G_s(x,y)=\frac{\overline{x-y}}{\|x-y\|^n}$,
and the fundamental solution of $\overline{\Gamma_0}$ is $\overline{G_s(x,y)}=\frac{x-y}{\|x-y\|^n}$, $x,y\in \SN$, see \cite{CRK} for details.
\par
Assume $\Omega$ is a bounded smooth domain on ${\SN}$ and $f \in C^1(\Omega, \ClN)$. One can define Cauchy transforms with respect to $D_s$ and $\overline{D_s}$ as below \cite{CRK}.
\begin{eqnarray*}
T_\Omega f(x)=\frac{1}{\omega_n}\int_\Omega G_s(x,y)f(y)dy=\int_\Omega \frac{\overline{x-y}}{\|x-y\|^n}f(y)dy,\quad
\overline{T}_\Omega f(x)=\frac{1}{\omega_n}\int_\Omega \overline{G_s(x,y)}f(y)dy=\int_\Omega \frac{x-y}{\|x-y\|^n}f(y)dy.
\end{eqnarray*}
Here, $T_{\Omega}$ ($\overline{T}_{\Omega}$) is also a left and right inverse for $D_s$ ($\overline{D_s}$) when considering functions with compact support, see Theorem \ref{BPF} below.
Also, we have two non-singular boundary integral operators
\begin{eqnarray*}
F_{\partial \Omega}f(x)=\frac{1}{\omega_n}\int_{\partial \Omega}G_s(x,y)n(y)f(y)d\sigma(y),\quad
\overline{F}_{\partial \Omega}f(x)=\frac{1}{\omega_n}\int_{\partial \Omega}\overline{G_s(x,y)}n(y)f(y)d\sigma(y).
\end{eqnarray*}
Then the Borel-Pompeiu formula for $D_s$ and $\overline{D_s}$ is stated as follows.
\begin{theorem}[Borel-Pompeiu formula\cite{LR}]\label{BPF}
\hfill\\
For $f \in C^1(\Omega)\cap C(\overline\Omega)$, we have
\begin{eqnarray*}
f(x)=\frac{1}{\omega_n}\int_{\partial \Omega}G_s(x,y)n(v)f(v)d\sigma(y)+\frac{1}{\omega_n}\int_\Omega G_s(x,y)D_sf(y)dy, \label{1}
\end{eqnarray*}
in other words, $f=F_{\partial \Omega}f+T_\Omega D_sf$. Similarly, $f=\overline{F}_{\partial \Omega}f+\overline{T}_\Omega \overline{D_s}f$
\begin{eqnarray*}
f(x)=\frac{1}{\omega_n}\int_{\partial \Omega}\overline{G_s(x,y)}n(y)f(y)d\sigma(y)+\frac{1}{\omega_n}\int_\Omega \overline{G_s(x,y)}\overline{D_s}f(y)dy, \label{1}
\end{eqnarray*}
In particular, if $f$ has compact support, then $T_{\Omega}D_s=\overline{T_{\Omega}}\overline{D_s}=I$.
\end{theorem}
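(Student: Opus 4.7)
The plan is to mimic the classical proof of the Borel–Pompeiu formula from flat Clifford analysis, now on the sphere, with $G_s$ playing the role of the Cauchy kernel and $D_s$ in place of $D_0$. The two essential ingredients are (i) that $G_s(x,\cdot)$ is right-$D_s$-monogenic in $y$ away from $y=x$, which follows from the identity $D_s = x(\Gamma_0 - n/2)$ and the fact, recalled in the excerpt, that $G_s$ is the fundamental solution of $\Gamma_0$ on $\SN$; and (ii) a Stokes-type formula on a domain of $\SN$ for pairs of $\ClN$-valued functions differentiated from opposite sides by $D_s$.

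I would fix $x\in\Omega$ and, for small $\epsilon>0$, remove a spherical cap $B_\epsilon(x)\cap\SN$ to form $\Omega_\epsilon=\Omega\setminus\overline{B_\epsilon(x)}$. On $\Omega_\epsilon$ both $G_s(x,\cdot)$ and $f$ are smooth, so the Stokes formula yields
\begin{eqnarray*}
\int_{\partial\Omega_\epsilon} G_s(x,y)\,n(y)\,f(y)\,d\sigma(y) = \int_{\Omega_\epsilon}\bigl([G_s(x,y)D_s]f(y) + G_s(x,y)D_sf(y)\bigr)\,dy.
\end{eqnarray*}
The right-monogenicity of $G_s$ kills the first term in the integrand. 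Splitting $\partial\Omega_\epsilon = \partial\Omega \sqcup \partial B_\epsilon(x)$ (with the inner piece carrying reversed orientation), sending $\epsilon\to 0$ then produces $\omega_n F_{\partial\Omega}f(x)$ from the outer boundary and $\omega_n T_\Omega D_s f(x)$ from the volume integral, giving the stated identity provided the cap integral converges to $\omega_n f(x)$.

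The main obstacle is precisely this last limit,
\begin{eqnarray*}
\lim_{\epsilon\to 0}\int_{\partial B_\epsilon(x)\cap\SN} G_s(x,y)\,n(y)\,f(y)\,d\sigma(y) = \omega_n f(x).
\end{eqnarray*}
Near $x$, one chooses normal coordinates on $\SN$ and uses that $y-x$ is asymptotically tangential while $n(y)$ is asymptotically $-(y-x)/\|y-x\|$; then $G_s(x,y)\,n(y) = \overline{(x-y)}n(y)/\|x-y\|^n$ reduces to the Euclidean Cauchy kernel paired with a radial normal, so by continuity of $f$ the integral converges to $\omega_n f(x)$ exactly as in the flat case. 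The conjugate identity $f = \overline{F}_{\partial\Omega}f + \overline{T}_\Omega \overline{D_s}f$ follows by the same argument, with $\overline{G_s}$ right-$\overline{D_s}$-monogenic.

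Finally, if $f$ has compact support in $\Omega$, then $f|_{\partial\Omega}=0$ forces $F_{\partial\Omega}f=0$ and $\overline{F}_{\partial\Omega}f=0$, so the Borel–Pompeiu formulas reduce to $f = T_\Omega D_s f = \overline{T}_\Omega \overline{D_s} f$, i.e., $T_\Omega D_s = \overline{T}_\Omega\,\overline{D_s} = I$ on such functions.
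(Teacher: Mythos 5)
The paper offers no proof of this theorem: it is quoted verbatim from Liu and Ryan \cite{LR}, so there is no internal argument to compare against. Your sketch reproduces the standard route taken in that reference: excise a small cap about the singularity, apply a spherical Clifford--Stokes formula to $G_s(x,\cdot)$ and $f$ on $\Omega_\epsilon$, use right-monogenicity of the kernel to kill the extra volume term, and evaluate the cap limit from the local Euclidean asymptotics of $G_s(x,y)\,n(y)$. The outline is right, the homogeneity count in the cap limit is right (the kernel paired with the normal is of order $\|x-y\|^{-(n-1)}$ against an $(n-1)$-dimensional cap boundary, so the limit is the area constant times $f(x)$), and the reduction to $T_\Omega D_s=\overline{T}_\Omega\overline{D_s}=I$ for compactly supported $f$ is immediate as you say.

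Two of your steps are asserted rather than established, and they are precisely where the content of the theorem lives. First, the Stokes-type identity $\int_{\partial\Omega_\epsilon} g\,n\,f\,d\sigma=\int_{\Omega_\epsilon}\bigl[(gD_s)f+g(D_sf)\bigr]\,dy$ is not the naive restriction of the Euclidean formula: $D_s=x(\Gamma_0-\tfrac{n}{2})$ carries a zeroth-order term and a left Clifford multiplication by $x$, and one must verify that the operator acting from the right on $g$ which makes the identity hold is the one that actually annihilates $G_s(x,\cdot)$; this compatibility is a genuine computation in \cite{LR}, not a formality. Second, your justification of right-$D_s$-monogenicity of $G_s(x,\cdot)$ in $y$ --- that it ``follows from'' $G_s$ being the fundamental solution of $\Gamma_0$ --- is not a valid inference: being a (left) fundamental solution in one variable does not formally yield right annihilation in the other, and the two-sided property of the spherical Cauchy kernel needs its own verification. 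Both facts are true and proved in the cited source, so your proof is correct in structure, but as written it defers its two essential lemmas rather than proving them.
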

\section{The $\Pi$-operator in Hilbert space}
In this section, we will provide a $\Pi$-operator defined on a general Hilbert space. This $\Pi$-operator has the isometry property, which motivates the definitions of $\Pi$-operators in different conformally flat manifolds in the following sections.
\par
Let $H$ be a real Hilbert space, $\mathcal{S}$ is a dense subspace of $H$. Let $f,g\in \mathcal{S}\otimes \ClN$,
and $D$ is a linear map from $\mathcal{S}\otimes \ClN$ to itself. Further, $D$ also satisfies $D^*D=DD^*$ where $D^*$ is the dual operator of $D$ in the sense of
$
\langle Df,g\rangle=\langle f,D^*g\rangle,
$
where $\langle\ ,\ \rangle$ is the inner product on $H$. Suppose $G$ is an operator acting on $\mathcal{S}\otimes \ClN$, then it is called the inverse of $D$ if it satisfies $DG=GD=I$.
\begin{definition}
The generalized $\Pi$-operator in the Hilbert space $H$ is defined as
$$\Pi=D^*G.$$
\end{definition}
Next, we will show that the generalized $\Pi$-operator defined above also has the isometry property. 
\begin{theorem}
The generalized operator $\Pi=D^*G$ is an isometric operator in $H\otimes \ClN$.
\end{theorem}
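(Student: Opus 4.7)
The plan is to verify isometry directly from the definition by moving adjoints across the sesquilinear form and then exploiting normality together with $DG = I$. Concretely, I want to compute $\langle \Pi f, \Pi f \rangle = \langle D^* G f, D^* G f \rangle$ and reduce it to $\langle f, f \rangle$ in a handful of algebraic steps.

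First, I would invoke the adjoint relation $\langle Dh, k \rangle = \langle h, D^* k \rangle$ in its symmetric form $\langle D^* h, k \rangle = \langle h, D k \rangle$ to shift one copy of $D^*$ across the inner product, obtaining
\begin{equation*}
\langle D^* G f, D^* G f \rangle \;=\; \langle G f, \, D D^* G f \rangle.
\end{equation*}
Next I would apply the standing hypothesis that $D$ is normal, $D D^* = D^* D$, to rewrite $D D^* G = D^* D G$. Since $G$ is a two-sided inverse of $D$ we have $DG = I$, so the right factor collapses: $D^* D G = D^*(DG) = D^*$. Substituting back gives $\langle G f, D^* f \rangle$. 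A second application of the adjoint relation yields $\langle DG f, f \rangle = \langle I f, f \rangle = \langle f, f \rangle$, which is the desired identity.

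The bulk of the argument is therefore a four-line manipulation. The main thing to be careful about is the Clifford-valued sesquilinear form: one should check that the adjoint identity $\langle Df,g\rangle=\langle f,D^*g\rangle$ extends (by $\mathcal{C}l_n$-linearity in the appropriate slot) to $\mathcal{S}\otimes\mathcal{C}l_n$, so that we can legitimately slide $D$ and $D^*$ past the Clifford-valued pairing without worrying about non-commutativity. Once that is in place, every step above is just composition of operators on $\mathcal{S}\otimes\mathcal{C}l_n$, and the intermediate expressions are well defined because $G$ maps into the domain of $D$ (indeed into $\mathcal{S}\otimes\mathcal{C}l_n$ itself).

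I expect the potentially subtle point to be the interplay between normality $D^*D=DD^*$ and the one-sided identity $DG = I$: one needs $D^*$ and $G$ to act on the same dense subspace $\mathcal{S}\otimes\mathcal{C}l_n$ so that $D^*DG$ parses as $D^*(DG)$. This is where the hypothesis that $D$, $D^*$, and $G$ all preserve $\mathcal{S}\otimes\mathcal{C}l_n$ is used; after that, isometry follows from the short chain of equalities above, and one concludes $\|\Pi f\| = \|f\|$ for every $f\in\mathcal{S}\otimes\mathcal{C}l_n$, whence $\Pi$ extends to an isometry on the full module $H\otimes\mathcal{C}l_n$ by density.
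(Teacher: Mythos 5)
Your proposal is correct and follows essentially the same route as the paper: shift one $D^*$ across the pairing, use normality $DD^*=D^*D$, and collapse $DG=I$, the only cosmetic difference being that you apply $DG=I$ before moving the remaining $D^*$ back, whereas the paper writes the final step as $\langle DGf,DGg\rangle=\langle f,g\rangle$. Your added remarks about the Clifford-valued sesquilinear form and the operators preserving $\mathcal{S}\otimes\mathcal{C}l_n$ are sensible precautions that the paper leaves implicit.
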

\begin{proof}
\begin{eqnarray*}
\langle \Pi f,\Pi g\rangle=\langle D^*Gf,D^*Gg\rangle=\langle Gf,DD^*Gg\rangle 
=\langle Gf,D^*DGg\rangle=\langle DGf,DGg\rangle=\langle f,g\rangle.
\end{eqnarray*}
\end{proof}
Further, our generalized $\Pi$-operator can also be used to solve certain Beltrami equations. More specifically, if we let $H$ be $L^2(X)$, where $X$ is a measure space with a measure $\eta$, then we can define a Beltrami equation over $H\otimes\ClN$ i.e., $L^2(X,\ClN)$ as 
$Df=qD^*f,$
where $q\in L^{\infty}(X,\ClN) $. This is similar as in Euclidean space with the essential supremum norm with respect to $\eta$. By the
substitution $f=\phi+Gh$ where $\phi$ is a solution for $D\phi=0$, we transform the Beltrami equation in the following way.
\begin{eqnarray*}
D(\phi+Gh)=h=qD^*(\phi+Gh)=q(D^*\phi+\Pi h).
\end{eqnarray*}
Hence, if $h$ is the unique solution of the equation
$h=q(D^*\phi+\Pi h),$
then $f=\phi+Gh$ is the unique solution of the Beltrami equation. The Banach fixed point theorem tells us this equation has a unique solution if $\|q\|\leq q_0< \frac{1}{\|\Pi\|}$, with $q_0$ being a constant. Hence, as in the classical case, the problems of the existence of the solution to the Beltrami equation becomes the norm estimate of the generalized $\Pi$-operator.
\par
As special cases of this general Hilbert space approach, one has the $L^2$ isometry of the usual $\Pi$-operator in one complex variable and the $\Pi$-operator in $\mathbb{R}^n$ described in \cite{Blaya,GK,GKS} and elsewhere. The next sections describe the $\Pi$-operator acting over $L^2$ spaces over other manifolds.
\section{$\Pi$-operators on real projective space}
Recall the construction of our $\Pi$-operator in the previous section, if we let $X$ be the real projective space $\RP$ with the measure $\eta$ by pushing forward the Lebesgue measure on $\SN$. Then, $H=L^2(\RP,\mathbb{R})$ becomes a real Hilbert space,  and $H\otimes \ClN$ is a Clifford-Hilbert module with the inner product
\begin{eqnarray*}
\langle f,g\rangle=\int_{V'}\overline{f}(x)g(x)d\eta(x),
\end{eqnarray*}
where $V'$ is a subset of real projective space with $\overline{V'}$ inclosed and $f, g: V'\longrightarrow \ClN$. Therefore, we can obtain the $\Pi$-operator theory on real projective space as a special case of Section $3$. More details are given below.
\subsection{Dirac operators on real projective space}
We know that the real projective space $\RP$ is defined as $\SN/\Gamma$, where $\Gamma=\{\pm1\}$. This implies that $\Pi$-operator theory on the real projective space can be generalized from the $\Pi$-operator theory on the unit sphere. Notice that there is a projection map $p: \SN\longrightarrow\RP$, such that for each $x\in \SN$, $p(\pm x)=x'$. If Q is a subset of $\SN$, we denote $p(\pm Q)=Q'$. First, we consider the bundle $E_1$ by making the identification of $(x,X)$ and $(-x, X)$ where $x\in \SN$ and $X\in \ClN$.
\par
Now we change the spherical Cauchy kernel $G_s(x,y)=- \frac{1}{\omega_n}\frac{\overline{x-y}}{\|x-y\|^n}$, $x,y\in \SN$ into a kernel which is invariant with respect to $\Gamma=\{\pm1\}$, and this gives us a kernel $G_{\RP_1}(x,y)=G_s(x,y)+G_s(-x,y)$ for $\RP$ \cite{KR}.
\par
Suppose $S$ is a suitably smooth hypersurface lying in the northern hemisphere of $\SN$ and $V$ is also a domain lying in the northern hemisphere sphere that $S$ bounds a subdomain $W$ of $V$. If $f: V \longrightarrow \ClN$ is a left spherical monogenic function and $y\in W$, then we have
\begin{eqnarray*}
f(x)=\frac{1}{\omega_n}\int_S \big(G_s(x,y)+G_s(-x,y)\big)n(y)f(y)d\sigma(y),
\end{eqnarray*}
where $n(y)$ is the unit outer normal vector to $S$ at $x$ lying in the tangent space of $\SN$ at $y$. Now we use the projection map $p:\SN\longrightarrow \RP$ to note that this projection map induces a function $f': V'\longrightarrow E_1$, which satisfies \cite{KR}
\begin{eqnarray*}
f'(x')=\frac{1}{\omega_n}\int_{S'} G_{\RP_1}(x',y')dp(n(y))f'(y')d\sigma'(y'),
\end{eqnarray*}
where $x'=p(x)$, $y'=p(y)$, $S'=p(S)$ and $\sigma'$ on $S'$ is induced from $\sigma$ on $S$ by the map $p$.
Now we will assume that the domain $V$ satisfies that $-x\in V$ for each $x\in V$, the function $f$ is two-fold periodic, so that $f(x)=f(-x)$ and $S=-S$. Now the projection map $p$ gives rise to a well defined domain $V'$ on $\RP$ and a well defined function $f'(x'): V'\longrightarrow E_1$ such that $f'(x')=f(\pm x)$. As the function is spherical monogenic, i.e., $D_sf(x)=0$, we can induce a Dirac operator $D_{\RP_1}$ on $\RP$ and $D_{\RP_1}f'(x')=0$. In this case \cite{KR},
$$2f'(x')=\frac{1}{\omega_n}\int_{S'} G_{\RP_1}(x',y')dp(n(x))f'(y')d\sigma'(y').$$
Similarly, we have the conjugate of the Dirac operator $\overline{D_{\RP_1}}$, 
and the kernel of $\overline{D_{\RP_1}}$ is $\overline{G_{\RP_1}(x,y)}=\overline{G_s(x,y)}+\overline{G_s(-x,y)}$.
\par
Now we induce the Cauchy transform and its conjugate from $\SN$ to $\RP$ as follows.
\begin{eqnarray*}
T_{V'_1} f'(x')=\frac{1}{\omega_n}\int_{V'} G_{\RP_1}(x',y')f'(y')dy',\quad
\overline{T_{V'_1}} f'(x')=\frac{1}{\omega_n}\int_{V'} \overline{G_{\RP_1}(x',y')}f'(y')dy'.
\end{eqnarray*}
Also, a non-singular boundary integral operator and its conjugate are given by
\begin{eqnarray*}
F_{S'}f'(x')=\frac{1}{\omega_n}\int_{S'}G_{\RP_1}(x',y')dp(n(y'))f'(y')d\sigma'(y'),\quad
\overline{F_{S'}}f'(x')=\frac{1}{\omega_n}\int_{S'}\overline{G_{\RP_1}(x',y')}dp(n(y'))f'(y')d\sigma'(y').
\end{eqnarray*}
Hence, one obtains a Borel-Pompeiu formula as follows.
\begin{theorem}  For $f'\in C^1(V',\ClN)\cap C(\bar V')$, we have
\begin{align*}
2f'(x')=\frac{1}{\omega_n}\int_{S'}G_{\RP_1}(x',y')dp(n(y))f'(y')d\sigma'(y')
+\frac{1}{\omega_n}\int_{V'} G_{\RP_1}(x',y')D_{\RP_1}f'(y')dy'.
\end{align*}
In particular, if $f'$ has compact support, then
\begin{eqnarray*}
2f'(x')=\frac{1}{\omega_n}\int_{V'} G_{\RP_1}(x',y')D_{\RP_1}f'(y')dy',
\end{eqnarray*}
from which we can obtain $TD_{\RP_1}=2I$.
\end{theorem}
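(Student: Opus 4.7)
The plan is to derive this Borel--Pompeiu formula by lifting $f'$ to a two-fold periodic function on $\SN$, applying the spherical Borel--Pompeiu formula (Theorem \ref{BPF}) at a point and its antipode, and then descending through the projection $p$. Concretely, I would set $V:=p^{-1}(V')\subset\SN$ (automatically symmetric: $V=-V$) and define $f(x):=f'(p(x))$, so that $f\in C^1(V,\ClN)\cap C(\overline V)$ satisfies $f(-x)=f(x)$. By the construction of $D_{\RP_1}$ as the descent of $D_s$ under $p$, the quantity $(D_{\RP_1}f')(p(y))$ represents $D_sf(y)$ (with the appropriate bundle $E_1$/$E_{-1}$ interpretation that accommodates the sign flip $(D_sf)(-y)=-(D_sf)(y)$ caused by the factor of $x$ in $D_s=x(\Gamma_0-n/2)$).

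Next, I would apply Theorem \ref{BPF} to $f$ on $V$ at both $x$ and $-x$, each of which lies in $V$, and add the two resulting identities. Because $f(-x)=f(x)$, the left-hand sides combine to $2f(x)$, while the right-hand sides merge to give
\begin{align*}
2f(x) &= \frac{1}{\omega_n}\int_{\partial V}\bigl[G_s(x,y)+G_s(-x,y)\bigr]n(y)f(y)\,d\sigma(y)\\
&\quad+\frac{1}{\omega_n}\int_{V}\bigl[G_s(x,y)+G_s(-x,y)\bigr]D_sf(y)\,dy,
\end{align*}
in which the symmetrized kernel is exactly $G_{\RP_1}(x,y)$.

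The final step is to descend both integrals to $\RP$ along $p$. A direct check using the parity identities $G_{\RP_1}(x,-y)=-G_{\RP_1}(x,y)$, $n(-y)=-n(y)$, $f(-y)=f(y)$, and $(D_sf)(-y)=-(D_sf)(y)$ shows that each of the two integrands is invariant under $y\mapsto -y$, which is precisely the condition allowing them to descend to well-defined integrals on $S'=p(\partial V)$ and $V'$ against the pushforward measures $d\sigma'$, $dy'$ and the pushforward normal $dp(n(y))$. This reproduces the stated identity. The compact-support case then follows by vanishing of the boundary term, and inserting that corollary into the definition of $T_{V'_1}$ yields $TD_{\RP_1}=2I$.

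The principal obstacle I anticipate is the bookkeeping in the descent step: one must verify that the pushforward measures and the induced normal $dp(n)$ are normalized so that the parity cancellations place the factor $2$ exactly on the left-hand side (rather than being absorbed or doubled by the $2$-to-$1$ covering), and that the bundle interpretation of $D_{\RP_1}f'$ is genuinely compatible with pairing against the anti-invariant kernel $G_{\RP_1}$ so that the volume integrand makes sense as a section on $V'$.
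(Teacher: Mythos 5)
Your proposal is correct and takes essentially the same route as the paper, which obtains the formula by symmetrizing the spherical Borel--Pompeiu identity of Theorem \ref{BPF} over the antipodal map and descending through $p$ (the paper carries this out explicitly only for the boundary Cauchy integral of a monogenic $f$, citing \cite{KR}, and then asserts the full statement). Your parity bookkeeping --- that $G_{\RP_1}(x,-y)=-G_{\RP_1}(x,y)$, $n(-y)=-n(y)$, and $(D_sf)(-y)=-(D_sf)(y)$ for even $f$, so that both integrands are invariant under $y\mapsto -y$ and descend with the pushforward measure placing the factor $2$ on the left --- correctly supplies the details the paper leaves implicit.
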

Since the domain $V=-V$, if we restrict it on the northern hemisphere, the Dirac operator $D_{\RP_1}$ is locally homeomorphic to $D_s$. Hence, we project it on the domain $V'\subset\RP$ to obtain
$$D_{\RP_1}\frac{1}{\omega_n}\int_V G_s(x,y)f(y)dy=f(x).$$
Now, for the whole domain $V$, after applying the projection on the domain $V'\subset\RP$, we have
$$D_{\RP_1}\frac{1}{\omega_n}\int_{V'} \big(G_s(x,y)+G(-x,y)\big) f'(y')dy'=2f'(x),$$
that is $D_{\RP_1}T=2I$. Similarly, we have $\overline{D_{\RP_1}T}=\overline{TD_{\RP_1}}=2I$.
\par
In the rest of this section, we will study spectrum of  the operators $\overline{D_{\RP_1}}$ and $T$, this helps us to show that the $\Pi$-operator defined in the next section also has an  $L^2$ isometry property. Similar argument can be found in \cite{CRK}.
\par
Let $H_m$ denote the space of $\ClN$-valued harmonic polynomials with homogeneity of degree $m$ on $\SN$. It is well known that $L^2(\SN)=\sum_{m=0}^\infty H_{2m}$, see \cite{Ax}. Now we consider a function $f(x)$ defined on an open domain $V\subseteq\SN$ and it also satisfies that $-x\in V$ for each $x\in V$ and $f(x)=f(-x)$. Such a domain $V$ can be projected on the real projective space $\RP$ by $p(\pm x)=x'$. Since $f(x)=\sum_{m=0}^\infty h_m(x)$, we have $f(-x)=\sum_{m=0}^\infty h_m(-x)$, and by the projection map we have $f'(x')=\sum_{m=0}^\infty h'_{2m}(x')$. Hence, $L^2(\RP)=\sum_{m=0}^\infty H'_{2m}$, where $H'_{2m}$ is the projection of $H_{2m}$ on the real projective space.
\par
Assume that $P_m$ is the space of spherical $\ClN$-valued left monogenic polynomials with homogeneity of degree $-m$ and $Q_m$ is the space of spherical $\ClN$-valued left monogenic polynomials with homogeneity of degree $n+m$, $m=0,1,2,...$. We have already known that $H_m=P_m\bigoplus Q_m$ on $\SN$ (see \cite{Balinsky}), that is for each $h_m(x)\in H_m(\SN)$ there exist $p_m(x)\in P_m(\SN)$ and $q_m(x)\in Q_m(\SN)$ such that $h_m(x)=p_m(x)+q_m(x)$. Hence $h_m(-x)=p_m(-x)+q_m(-x)$ and by the projection map, we have similar decomposition on the real projective space as $h'_{2m}(x')=p'_{2m}(x')+q'_{2m}(x')$. In other words, $L^2(\RP)=\sum_{m=0}^\infty P'_{2m}\bigoplus Q'_{2m}$. As we know that $D_s(P_m)=Q_m$ and $D_s(Q_m)=P_m$, we also have $D_{\RP_1}(P'_{2m})=Q'_{2m}$ and $D_{\RP_1}(Q'_{2m})=P'_{2m}$. Hence $D_{\RP_1}$ maps $L^2(\RP)$ to itself, similarly for $\overline{D_{\RP_1}}$. From the result in the unit sphere case, we have the spectrum of the real projective Dirac operator as follows.
\begin{align*}
\sigma(D_{\RP_1})=\sigma(\overline{D_{\RP_1}})=\{-2m-n, m=0,1,2,...\}\cup \{2m+n, m=0,1,2,...\}.
\end{align*}
Since we previously mentioned that $\overline{D_{\RP_1}T}=\overline{TD_{\RP_1}}=2I$, and $T:Q'_{2m}\longrightarrow P'_{2m}$ and $T: P'_{2m}\longrightarrow Q'_{2m}$, then the spectrum of $T$ and its conjugation $\overline{T}$ on the real projective space are $$\sigma(\overline{T})= \sigma(T)=\{\frac{2}{2m+n}, m=0,1,2,...\}\cup\{\frac{2}{-2m-n}, m=0,1,2,...\}.$$
\subsection{Construction of $\Pi$-operator on the real projective space}
We first give the definition for the $\Pi$-operator on the real projective space as follows.
\begin{definition}
The $\Pi$-operator on the real projective space is defined by $\Pi_{\RP_1}=\frac{1}{2}\overline{D_{\RP_1}}T.$
\end{definition}
The constant $\frac{1}{2}$ allows $\Pi_{\RP_1}$ to be $L^2$ isometric, we will see more details below. One can also see that $\Pi_{\RP_1}$ maps $L^2(\RP)$ to $L^2(\RP)$. 
\begin{theorem}
$\Pi_{\RP_1}$ is an $L^2(\RP)$ isometry.
\end{theorem}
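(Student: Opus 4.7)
The plan is to recognize $\Pi_{\RP_1}$ as a concrete instance of the abstract generalized $\Pi$-operator from Section 3, and then invoke the isometry theorem proved there. Specifically, take $H = L^2(\RP, \mathbb{R})$ with dense subspace $\mathcal{S} = C_c^\infty(V')$, set $D = D_{\RP_1}$, and set $G = \tfrac{1}{2}T$. Then $\Pi_{\RP_1} = \overline{D_{\RP_1}}\cdot \tfrac{1}{2}T$ has exactly the form $D^{*}G$ from Section 3, and the abstract isometry theorem will immediately give the result.

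To invoke that theorem, three facts must be verified: (i) $\overline{D_{\RP_1}}$ is the formal adjoint of $D_{\RP_1}$ with respect to the Clifford-Hilbert inner product on $L^2(\RP, \ClN)$; (ii) $D_{\RP_1}$ and $\overline{D_{\RP_1}}$ commute; (iii) $G = \tfrac{1}{2}T$ is a two-sided inverse for $D_{\RP_1}$ on compactly supported functions. Item (iii) is already in hand from the Borel-Pompeiu calculation, which established $D_{\RP_1}T = TD_{\RP_1} = 2I$. For (i), the strategy is to lift along the double cover $p:\SN \to \RP$: if $f',g' \in C_c^\infty(V')$ then $f = f'\circ p$, $g = g'\circ p$ are even functions on $p^{-1}(V')\subset \SN$, and by construction $D_{\RP_1}f'$ pulls back to $D_s f$. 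Stokes' theorem on $\SN$ for the spherical Dirac operator (with vanishing boundary terms by compact support) gives $\langle D_s f,g\rangle_{\SN} = \langle f, \overline{D_s}g\rangle_{\SN}$, where the Clifford involution appears because it reverses the order and sign of the basis vectors making up $D_s$; dividing by $2$ (the order of the deck group $\Gamma=\{\pm 1\}$) transfers this identity to $\RP$. For (ii), one observes that on $\SN$ the product $D_s \overline{D_s}$ equals (a scalar shift of) the spherical Laplace-Beltrami operator, which is scalar and hence self-commuting with anything; this identity descends through $p$.

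Once (i)-(iii) are certified, the computation of Section 3 proceeds verbatim:
\begin{align*}
\langle \Pi_{\RP_1}f', \Pi_{\RP_1}g'\rangle
&= \tfrac{1}{4}\langle \overline{D_{\RP_1}}Tf', \overline{D_{\RP_1}}Tg'\rangle
= \tfrac{1}{4}\langle Tf', D_{\RP_1}\overline{D_{\RP_1}}Tg'\rangle \\
&= \tfrac{1}{4}\langle Tf', \overline{D_{\RP_1}}D_{\RP_1}Tg'\rangle
= \tfrac{1}{4}\langle D_{\RP_1}Tf', D_{\RP_1}Tg'\rangle
= \langle f',g'\rangle,
\end{align*}
and density of $C_c^\infty(V')$ in $L^2(\RP)$ extends the isometry to the full space.

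The main obstacle I expect is item (i): the sign conventions of the Clifford involution interact nontrivially with the angular-momentum operators $L_{i,j}$ making up $\Gamma_0$, so one has to be careful that the formal adjoint of $D_s = x(\Gamma_0 - n/2)$ under integration by parts on $\SN$ genuinely produces $\overline{D_s} = \bar{x}(\overline{\Gamma_0} - n/2)$ and not a sign-twisted variant. Once adjointness is settled on $\SN$, the descent to $\RP$ via the two-fold quotient is essentially bookkeeping, since every function involved has been arranged to be invariant under $x\mapsto -x$.
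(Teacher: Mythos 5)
Your overall strategy --- realizing $\Pi_{\RP_1}$ as the abstract operator $D^{*}G$ of Section 3 and invoking the general isometry theorem --- is a legitimate idea, and it is genuinely different from what the paper does: the paper never identifies the adjoint of $D_{\RP_1}$ at all, but instead decomposes $u\in L^2(\RP)$ along $\sum_m P'_{2m}\oplus Q'_{2m}$ and cancels the explicit eigenvalue $\frac{2}{\pm(2m+n)}$ of $T$ against the factor $\pm(2m+n)$ produced by $\overline{D_{\RP_1}}$ on each graded piece. Your reduction to facts (i)--(iii) is correctly organized, item (iii) is indeed already available from the Borel--Pompeiu computation ($D_{\RP_1}T=TD_{\RP_1}=2I$ on compactly supported functions), and a sign in (i) (i.e.\ $D^{*}=-\overline{D}$ rather than $\overline{D}$) would be harmless for the final computation, exactly as in the paper's own Section 6 argument for the hyperbolic operator $M$, where $M^{*}=-\overline{M}$ and the two minus signs cancel.

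The genuine gap is item (i), and you have named it yourself without closing it. The integration-by-parts argument you sketch works verbatim only for the flat operator $\sum_i e_i\partial_{x_i}$, where it yields $D_0^{*}=-\overline{D_0}$. For $D_s=x(\Gamma_0-\frac{n}{2})$ the adjoint computation must contend with the left multiplication by the variable vector $x$ and with the mixed signs in $\Gamma_0=\sum e_0e_jL_{0,j}-\sum e_ie_jL_{i,j}$: under Clifford conjugation $\overline{e_ie_j}=-e_ie_j$ while $L_{i,j}$ is skew-symmetric on $L^2(\SN)$, and moving $L_{i,j}$ across $\overline{x}$ generates zero-order terms $L_{i,j}(\overline{x})=x_i\overline{e_j}-x_j\overline{e_i}$ that must be shown to recombine into $\overline{\Gamma_0}$ acting after $\overline{x}$. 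None of this is carried out, and it is precisely the delicate step that the paper's spectral proof is designed to avoid; until $\langle D_sf,g\rangle=\pm\langle f,\overline{D_s}g\rangle$ is actually established on $\SN$ (after which the descent to $\RP$ by averaging over $\Gamma=\{\pm 1\}$ is indeed routine), the proof is incomplete. Item (ii) is also under-argued --- ``a scalar shift of the Laplace--Beltrami operator commutes with anything'' is not the statement you need, since what is required is $D_s\overline{D_s}=\overline{D_s}D_s$ --- but this one is repairable: both products preserve each finite-dimensional space $H_m=P_m\oplus Q_m$, $D_s$ is invertible there, and $D_s\overline{D_s}\vert_{H_m}=\lambda_m I$ then forces $\overline{D_s}D_s\vert_{H_m}=\lambda_m I$. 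If you prefer not to fight the adjoint identity, the paper's route --- computing $\Vert\tfrac12\overline{D_{\RP_1}}Tu\Vert^2$ directly on the decomposition into $P'_{2m}$ and $Q'_{2m}$ using the listed spectra of $T$ and $\overline{D_{\RP_1}}$ --- reaches the same conclusion with only the spectral facts you already quoted.
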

\begin{proof}
We only need to prove for the function $u\in C^1(\RP)\subset L^2(\RP)$, since $C^1(\RP)$ is dense in  $L^2(\RP)$. For such a function $u$, we have the decomposition
\begin{eqnarray*}
u=\displaystyle\sum_{m=0}^\infty\sum_{p'_{2m}\in P'_{2m}}p'_{2m}+\sum_{m=0}^{-\infty}\sum_{q'_{2m}\in Q'_{2m}}q'_{2m}.
\end{eqnarray*}
Hence, with similar arguments as in \cite{CRK}, we have
\begin{align*}
&||\frac{1}{2}\overline{D_{\RP_1}}Tu||^2_{L^2}=
\displaystyle\sum_{m=0}^\infty(\frac{1}{2m+n})^2\sum_{q'_{2m}\in Q'_{2m}}\|\overline{D_{\RP_1}}q'_{2m}\|_{L^2}
+\sum_{m=0}^{\infty}(\frac{1}{-2m-n})^2\sum_{p'_m\in P'_{2m}}\|\overline{D_{\RP_1}}p'_{2m}||_{L^2}\\
=&\displaystyle\sum_{m=0}^\infty(\frac{1}{2m+n})^2(2m+n)^2\sum_{p'_{2m}\in P'_{2m}}\|p'_{2m}||_{L^2}
+\sum_{m=0}^{\infty}(\frac{1}{-2m-n})^2(-2m-n)^2\sum_{q'_{2m}\in Q'_{2m}}\|q'_{2m}||_{L^2}\\
=&\displaystyle\sum_{m=0}^\infty\sum_{p'_{2m}\in P'_{2m}}||p'_{2m}||_{L^2}+\sum_{m=0}^{\infty}\sum_{q'_{2m}\in Q'_{2m}}||q'_{2m}||_{L^2}
=||u||_{L^2}.
\end{align*}
\end{proof}
It is worth pointing out that we can assign another bundle $E_2$ to $\RP$ by identifying the pair $(x, X)$ with $(-x, -X)$, where $x\in \SN$ and $X\in \ClN$. In this case, the projection map $p$ induces a Cauchy kernel $G_{\RP_2}$, which is antiperiodic with respect to $\Gamma=\{\pm 1\}$. Hence $G_{\RP_2}(x'-y')=G_s(x,y)-G_s(-x,y)$. Further, a Clifford holomorphic function $f: V\longrightarrow \ClN$ satisfying $f(x)=-f(-x)$ will give a Clifford holomorphic function $f: V'\longrightarrow E_2$. Similarly, we can induce another Cauchy transform and its conjugate from $\SN$ to $\RP$ as follows.
\begin{eqnarray*}
T_{V'_2} f'(x')=\frac{1}{\omega_n}\int_{V'} G_{\RP_2}(x'-y')f'(y')dy',\quad
\overline{T_{V'_2}} f'(x')=\frac{1}{\omega_n}\int_{V'} \overline{G_{\RP_2}(x'-y')}f'(y')dy'.
\end{eqnarray*}
With similar arguments as for $D_{\RP_1}$, we can define $D_{\RP_2}$ on $\RP$ with the bundle $E_2$, and the $\Pi$-operator is defined as $\Pi_{\RP_2}=\frac{1}{2}D_{\RP_2}T_{V'_2}$. Similar arguments as for $\Pi_{\RP_1}$ shows that $\Pi_{\RP_2}$ also possesses the $L^2$ isometry property.
\section{$\Pi$-Operators on Cylinders and Hopf Manifolds}
Let $X$ be the cylinders $C_k$ with the measure $\eta$ by pushing forward the Lebesgue measure on $\RN$ via the quotient map $\RN\longrightarrow\RN/ \mathbb{Z}^k$. Meanwhile, $H=L^2(\C_k,\R)$ is a real Hilbert space, and $H\otimes \ClN$ is a Clifford-Hilbert module with the inner product
\begin{eqnarray*}
\langle f,g\rangle=\int_{V'}\overline{f}gd\eta(x),
\end{eqnarray*}
where $V'$ is a domain on the cylinder $C_k$ with $\overline{V'}$ enclosed and $f, g: V'\longrightarrow \ClN$. Therefore we can construct the $\Pi$-operator theory on cylinders as demonstrated in the previous section.
\par
Similarly, if we let $X$ to be Hopf manifolds $\mathbb{S}^1\times \SN$ with the pushforward measure obtained via the quotient map defined below, and $H=L^2(\mathbb{S}^1\times \SN,\R)$ is a Hilbert space. Then we can build the $\Pi$-operator theory on the Clifford-Hilbert module $H\otimes \ClN$. More details are given below.
\subsection{$\Pi$-Operators on Cylinders}
For integer $k$, $1\leq k\leq n$, we define the $k$-cylinder $C_k$ to be the manifold $\RN/\mathbb{Z}^k$ where $\mathbb{Z}^k=\mathbb{Z}e_0+\mathbb{Z}e_1+...+\mathbb{Z}e_{k-1}$. Each element in $C_k$ has the form $m_0e_0+\cdots m_{k-1}e_{k-1}$ for $m_0,\cdots,m_{k-1}\in\mathbb{Z}$ and it is denoted by $\underline{t}$. For each $k$, the space $\RN$ is the universal covering space of the cylinder $C_k$. Hence, there is a projection map $p_k: \RN\longrightarrow C_k$.
\par
Let $U$ be an open subset of $\RN$. It is called \emph{$k$-fold periodic} if for each $x\in U$ and $t\in\mathbb{Z}^k$ we also have $x+\underline{t}\in U$. Hence, $U'=p_k(U)$ is an open subset of $C_k$. Suppose that $U\subseteq\RN$ is a $k$-periodic open set and $f(x)$ is a $\mathcal{C}l_n$-valued function defined on $U$, we say that $f(x)$ is a \emph{$k$-fold periodic function} if we have $f(x)=f(x+\underline{t})$ for each $x\in U$. Hence, the projection $p_k$ induces a well defined function $f':\ U'\longrightarrow \ClN$, where $f'(x')=f(x)$ for each $x'\in U'$ and $x$ is an arbitrary representative of $p_k^{-1}(x')$. Moreover, any function $f':\ U'\longrightarrow \ClN$ can lift to a $k$-fold periodic function $f:\ U\longrightarrow \ClN$, where $U=p_k^{-1}(U')$.
\par
In \cite{KR1}, the spinor bundle over $C_k$ is trivial on $C_k\times\ClN$. Other $k$ spinor bundles $E^{(l)}$ over $C_k$ arise by making the identification $(x,X)$ with $(x+\underline{m}+\underline{n}, (-1)^{m_0+m_1+...+m_l}X)$, where $l$ is an integer and $0\leq l\leq k$, $\underline{m}$ is in the lattice $\mathbb{Z}^l=\mathbb{Z}e_0+\mathbb{Z}e_1+...+\mathbb{Z}e_{l-1}$, and $\underline{n}$ is in the lattice $\mathbb{Z}^{k-l}=\mathbb{Z}e_l+\mathbb{Z}e_{l+1}+...+\mathbb{Z}e_{k-1}$.
\par
Let $G(x,y)=\frac{\overline{x-y}}{||x-y||^{n+1}}$ be the fundamental solution of the Euclidean Dirac operator. Consider the series
$
\cot_{k,0}(x,y)=\sum_{\underline{m}\in \mathbb{Z}^k}G(x-y+\underline{m})
$
which converges on $\RN\setminus \mathbb{Z}^k$, for $k< n-1$, see \cite{KR}. Then, the kernel of the Dirac operator on the cylinder $C_k$ with the trivial bundle has the form $\cot_{k,0}(x',y')$ which is defined on $(C_k\times C_k)\setminus diagonal(C_k)$, where $diagonal(C_k)=\{(x',x'): x'\in C_k\}$.
More generally, For $k<n-1$ and $l\leq k$, the kernel $\cot_{k,l}(x',y')$ of the Dirac operator on $C_k$ with the bundle $E^{(l)}$ is given by applying $p_k$ to
\begin{eqnarray*}
\cot_{k,l}(x,y)=\sum_{\underline{m}\in \mathbb{Z}^k,\underline{n}\in \mathbb{Z}^{l}}(-1)^{m_0+m_1+...m_{l-1}}G(x-y+\underline{m}+\underline{n}).
\end{eqnarray*}
Further, with the projection map $p_k$, we can otain the Dirac operator on $C_k$ with the bundle $E^{(l)}$, which is denoted by $D_l$. Similar argument applies for the conjugation $\overline{D_l}$ and its fundamental solution $\overline{\cot_{k,l}(x',y')}$. One also has $D_l\overline{D_l}=\overline{D_l}D_l=\Delta_l$, where $\Delta_l$ is a spinorial Laplacian, see \cite{KR}.
\par
Suppose $f:V\longrightarrow\RN$ satisfying $f(x+\underline{m}+\underline{n})=(-1)^{m_0+m_1+...m_{l-1}}f(x)$, where $\underline{m}\in \mathbb{Z}^k,\underline{n}\in \mathbb{Z}^{l}$. Then, $f$ can be lifted  by the projection map $p_k$ to a function $f':V'\longrightarrow E^{(l)}$, where $V'=p_k^{-1}(V)$. If $D_l f'=0$, $f'$ is called an $E^{(l)}$ left Clifford monogenic function.
\par
Using the fundamental solutions of the Dirac operators, we can define the Cauchy transform on different bundles. If $f':V'\longrightarrow E^{(l)}$, $S'$ is a surface lying in $V'$ and bounding a subdomain $W'$. Suppose $x'\in W'$, then
\begin{eqnarray*}
T_{V'} f'(x')=\frac{1}{\omega_{n}}\int_{V'} \cot_{k,l}(x',y')f'(y')dy',\quad
\overline{T_{V'}} f'(x')=\frac{1}{\omega_{n}}\int_{V'} \overline{\cot_{k,l}(x',y')}f'(y')dy'.
\end{eqnarray*}
Also, a non-singular boundary integral operator and its conjugate are given by
\begin{eqnarray*}
F_{S'}f'(x')=\frac{1}{\omega_{n}}\int_{S'}\cot_{k,l}(x',y')dp(n(y'))f'(y')d\sigma'(y'),\quad
\overline{F_{S'}}f'(x')=\frac{1}{\omega_{n}}\int_{S'}\overline{\cot_{k,l}(x',y')}dp(n(y'))f'(y')d\sigma'(y').
\end{eqnarray*}
Hence, the Borel-Pompeiu formula in this context is stated as follows.
\begin{theorem} \cite{KR1}  For $f'\in C^1(V',\ClN)\cap C(\overline{ V'})$, we have
\begin{align*}
f'(x')=&\frac{1}{\omega_{n}}\big(\int_{S'}\cot_{k,l}(x',y')dp(n(y))f'(y')d\sigma'(y')
+\frac{1}{\omega_{n}}\int_{V'} \cot_{k,l}(x',y')D_lf'(y')dy'\big).
\end{align*}
\end{theorem}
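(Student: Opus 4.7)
The plan is to establish that $\cot_{k,l}$ is the fundamental solution of the bundle Dirac operator $D_l$ on $C_k$ with values in $E^{(l)}$, and then to derive the Borel--Pompeiu identity from Clifford Stokes' theorem on $V'$, mimicking the Euclidean proof that underlies Theorem 2.1.

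First I would verify the fundamental solution property. Since $D_0 G(\cdot, y) = \omega_n \delta_y$ holds distributionally on $\RN$, termwise application of $D_0$ to
\begin{align*}
\cot_{k,l}(x,y) = \sum_{\underline{m}\in \mathbb{Z}^k,\,\underline{n}\in \mathbb{Z}^l} (-1)^{m_0 + \cdots + m_{l-1}}\, G(x - y + \underline{m} + \underline{n})
\end{align*}
produces a sum of deltas supported on the lattice translates of $y$, weighted by signs that are precisely those imposed by the identification $(x, X) \sim (x + \underline{m} + \underline{n},\ (-1)^{m_0+\cdots+m_{l-1}} X)$ defining $E^{(l)}$. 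Pushing forward by $p_k$ collapses all these deltas into a single $E^{(l)}$-valued delta at $y' \in C_k$, so $D_l\,\cot_{k,l}(\cdot, y') = \omega_n \delta_{y'}$ on $C_k$.

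With the fundamental solution in hand, the second step is to apply Clifford Stokes' theorem on $V' \setminus \overline{B_\varepsilon(x')}$ against the kernel $\cot_{k,l}(x', \cdot)$. Away from $x'$ the kernel is right $E^{(l)}$-monogenic in $y'$, so integration by parts yields the bulk term $\tfrac{1}{\omega_n}\int_{V'} \cot_{k,l}(x', y') D_l f'(y')\, dy'$ together with boundary contributions on $S'$ and on $\partial B_\varepsilon(x')$. The outer term is exactly $\tfrac{1}{\omega_n}\int_{S'} \cot_{k,l}(x', y')\, dp(n(y))\, f'(y')\, d\sigma'(y')$, and as $\varepsilon \to 0$ the inner term converges to $f'(x')$ by the fundamental solution property via a standard normal-coordinate computation on the ball, which is legitimate because $E^{(l)}$ is locally trivial near $x'$ and $\cot_{k,l}$ differs from $G$ there by a smooth remainder. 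Rearranging gives the claimed identity.

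The main obstacle will be justifying the termwise differentiation and the interchange of limit, sum, and integral that underpin the fundamental solution step: these rest on the absolute convergence of the series for $\cot_{k,l}$, assured only in the regime $k < n - 1$ set in \cite{KR1}, together with the decay $\|G(x,y)\| = O(\|x-y\|^{-n})$. These bounds supply a uniform dominating function in $\underline{m}, \underline{n}$ both away from and locally near the lattice singularities, so that dominated convergence secures each interchange and the proof is completed.
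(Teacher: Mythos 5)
The paper offers no proof of this theorem; it is quoted from \cite{KR1}, and your argument is essentially the one found there (and the one underlying the Euclidean Theorem 2.1): termwise differentiation of the lattice series to identify $\cot_{k,l}$ as the fundamental solution of $D_l$ on sections of $E^{(l)}$, followed by the standard Stokes computation on $V'$ with an excised $\varepsilon$-ball, with the convergence of the series in the regime $k<n-1$ justifying the interchanges. The one phrase worth tightening is that the signed deltas ``collapse into a single delta'': more precisely, when the distributional identity is tested against the automorphic lift $f$ of a section of $E^{(l)}$ over a fundamental domain, exactly one lattice translate of $y$ lies in that domain, and the sign $(-1)^{m_0+\cdots+m_{l-1}}$ carried by the kernel cancels against the sign in the automorphy condition $f(x+\underline{m}+\underline{n})=(-1)^{m_0+\cdots+m_{l-1}}f(x)$, so the pairing descends to the quotient and yields exactly $f'(y')$.
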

Similar as the case in Euclidean space, for a function $f'$ with compact support, we have $D_lT_{V'}=T_{V'}D_l=I$, and $\overline{D_lT_{V'}}=\overline{T_{V'}D_l}=I$ as well.
\par
Now we  define the $\Pi$-operator on the cylinder as follows.
\begin{definition}
The $\Pi$-operator on the cylinder is defined by $\Pi_l=\overline{D_l}T.$
\end{definition}
Since $\Pi_l$ is induced from the $\Pi$-operator in Euclidean space, we expect similar results as in \cite{GK}.
\begin{theorem}
$\Pi_l$ is an $L^2(C_k)$ isometric operator.
\end{theorem}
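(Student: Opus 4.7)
The plan is to derive this isometry as an instance of the abstract Hilbert-module theorem from Section~3, applied with $H \otimes \ClN = L^2(C_k, \ClN)$, dense subspace $\scs = C_0^\infty(V', \ClN)$, $D = D_l$, and $G = T$. Three structural inputs are needed: (i) an adjointness relation of the form $\langle D_l f, g\rangle = \epsilon \langle f, \overline{D_l} g\rangle$ for $f, g \in C_0^\infty(V', \ClN)$, with some sign $\epsilon \in \{\pm 1\}$; (ii) the commutation $D_l \overline{D_l} = \overline{D_l} D_l$, which is exactly the factorization $\Delta_l = D_l \overline{D_l} = \overline{D_l} D_l$ recorded in the preceding subsection; and (iii) that $T$ is a two-sided inverse of $D_l$ on compactly supported sections, also noted just above the theorem. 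Up to the sign $\epsilon$, these match the hypotheses of the abstract theorem with $D^* = \epsilon \overline{D_l}$, and $\Pi_l$ coincides with $\epsilon\, D^* G$, so the Section~3 proof applies verbatim.

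With these in place, the computation is:
\begin{align*}
\langle \Pi_l f, \Pi_l f\rangle &= \langle \overline{D_l} T f, \overline{D_l} T f\rangle = \epsilon \langle T f, D_l \overline{D_l} T f\rangle \\
&= \epsilon \langle T f, \overline{D_l} D_l T f\rangle = \epsilon^2 \langle D_l T f, D_l T f\rangle = \langle f, f\rangle.
\end{align*}
Taking scalar parts gives $\|\Pi_l f\|_{L^2} = \|f\|_{L^2}$ on the dense subspace $C_0^\infty(V',\ClN)$, and the identity extends to all of $L^2(C_k, \ClN)$ by continuity of $\Pi_l$.

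For (i), the strategy is to lift through the covering map $p_k:\RN \to C_k$. A compactly supported test section of $E^{(l)}$ on $V'$ corresponds to a function on $p_k^{-1}(V')\subset \RN$ that transforms by the sign $(-1)^{m_0 + \cdots + m_{l-1}}$ under the lattice $\mathbb{Z}^k$. Integrating by parts on a fundamental domain with $D_0 = \sum e_i \partial_{x_i}$ and using $\overline{e_i} = -e_i$ yields the desired relation, with the boundary contributions across opposite faces cancelling because both $f$ and $g$ carry the same twist. The principal obstacle is precisely this verification: one must confirm that the twisted periodicity defining $E^{(l)}$ produces exactly cancelling boundary terms so that the Euclidean integration-by-parts identity descends cleanly to $C_k$. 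Once (i) is secured, the rest is formal and mirrors the Euclidean argument in \cite{GK}.
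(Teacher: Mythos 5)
Your proposal is correct and follows essentially the same route as the paper, which simply invokes the abstract Section~3 computation (equivalently, Proposition~5 of \cite{GK}) with $D=D_l$, $G=T$, and $D_l^*=-\overline{D_l}$; the sign $\epsilon=-1$ you leave open is harmless since it enters squared. The only substantive point you add beyond what the paper records---the verification that the twisted periodicity of $E^{(l)}$ makes the boundary terms on opposite faces of a fundamental domain cancel, so that $D_l^*=-\overline{D_l}$ descends from $\RN$---is exactly the detail the paper leaves implicit, and your sketch of it is sound.
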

\begin{proof}
The proof is similar to the proof of Proposition 5 in \cite{GK}.
\end{proof}
In this section, we will use the norm estimation of the $\Pi$-operator on the cylinder to determine existence of the solution of Beltrami equation on the cylinder. First, we define the Beltrami equation on the cylinder as follows.
\par
Let $V' \subseteq C_k$ be a bounded, simply connected domain with sufficiently smooth boundary, and $q, f': V'\longrightarrow E^{(l)}$,  q is a measurable function, and $f'$ is sufficiently smooth. The Beltrami equation on the cylinder is as follows.
$$ D_lf'=q\overline{D_l}f'.$$
We already explained how the estimate of the norm of $\Pi$-operator determines the existence and uniqueness of solutions to Beltrami equations in Section $3$. Next, we will provide a norm estimate for our $\Pi$-operator here.
\par
Suppose $V=\bigcup_{i=1}^\infty V_i=p_k^{-1}(V')$, such that $p_k(V_i)=V',\ i=1,2,\cdots$. $f:V_i\longrightarrow\ClN$ is a piecewise continuous function with compact support, and $f$ can be induced to $f':V'\longrightarrow E$. For the $\Pi$-operator on $\RN$, we have $\|\Pi\|_{L^p(\RN)}\leq (n+1)(p^*-1)$, where $p^*=max(p, p/(p-1))$, see \cite{NW}.
\par
Recall that $\Pi_l=\overline{D_l}\cot_{q,k,0}\ast$, where $``\ast"$ is the standard convolution. On each subdomain $V_i$, we have $\|\overline{D_l}\cot_{q,k,0}\ast f'(x')\|_{L^p(V_i)}=\|\overline{D}G\ast f(x')\|_{L^p(V_i)}\leq (n+1)(p^*-1)\|f(x')\|_{L^p(V_i)}$. Hence for the domain $V=\bigcup_{i=1}^\infty V_i$, we have $\|\overline{D_l}\cot_{q,k,0}\ast f(x)\|_{L^p(V)}=\|\overline{D}G\ast f(x)\|_{L^p(V)}\leq (n+1)(p^*-1)\|f(x)\|_{L^p(V)}$. Applying the projection $p_k$ on $V$, we could obtain
\begin{theorem}
 $$\|\overline{D'}\cot'_{q,k,0}\ast f(x')\|_{L^p(V')}\leq (n+1)(p^*-1)\|f(x')\|_{L^p(V')}, $$
 \end{theorem}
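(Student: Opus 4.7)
The strategy is to transfer the cylindrical estimate to the Euclidean universal cover and invoke the known Calder\'on--Zygmund bound $\|\Pi\|_{L^p(\RN)} \leq (n+1)(p^*-1)$ recorded in \cite{NW}. Given compactly supported $f'$ on $V' \subset C_k$, I would pick a representative $f$ supported in a single fundamental cell $V_1 \subset V = p_k^{-1}(V')$, so that $p_k$ realizes $L^p(V_1)$ isometrically onto $L^p(V')$; in particular $\|f\|_{L^p(\RN)} = \|f\|_{L^p(V_1)} = \|f'\|_{L^p(V')}$. This is the natural transfer principle, compatible with the trivial bundle $E^{(0)}$ appearing in the statement.

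The heart of the argument is the kernel identity
$$\cot_{k,0}(x,y) \;=\; \sum_{\underline{m} \in \mathbb{Z}^k} G(x-y+\underline{m})$$
already recorded in this section. Substituting it into $\overline{D_0}\cot_{k,0}\ast f'(x')$ and lifting through $p_k$ to any representative $x \in V$, the cylindrical $\Pi$-transform of $f'$ becomes the $\mathbb{Z}^k$-periodization of the Euclidean $\Pi$-transform $\overline{D}G\ast f$. Comparing $L^p$-norms cell-by-cell via the triangle inequality and translation invariance then yields
$$\|\Pi_0 f'\|_{L^p(V')} \;\leq\; \|\overline{D}G\ast f\|_{L^p(\RN)},$$
after which the Euclidean Ahlfors--Beurling bound of \cite{NW} closes the estimate because $\|f\|_{L^p(\RN)} = \|f'\|_{L^p(V')}$.

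The main obstacle is justifying the interchange of the lattice sum with both the convolution and the differential operator $\overline{D_0}$. The series $\sum_{\underline{m}} G(x-y+\underline{m})$ converges only conditionally and, as recorded in \cite{KR}, the regime $k < n-1$ is required for the relevant absolute convergence; moreover, $\overline{D}G$ is a genuine singular kernel, so the convolution must be handled in the principal value sense. I would address this by first establishing the lift-and-sum identity for smooth compactly supported $f$ via a standard truncation of the lattice, controlling the tail by the decay $|G(x)| = O(\|x\|^{-n})$, and then extending the norm inequality to arbitrary $L^p$ inputs by density of $C_0^\infty$ in $L^p(V')$. Once these technical points are cleared, the reduction to the Euclidean estimate is immediate and the constant $(n+1)(p^*-1)$ is inherited without loss.
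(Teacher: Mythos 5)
Your route differs from the paper's in mechanism. The paper decomposes the preimage $V=p_k^{-1}(V')=\bigcup_i V_i$ into copies of a fundamental cell, asserts that on each $V_i$ the cylindrical convolution has the same $L^p$ norm as the Euclidean one, sums over $i$, and then projects by $p_k$. You instead support the lift $f$ in a single cell $V_1$, expand the periodized kernel $\cot_{k,0}(x,y)=\sum_{\underline{m}}G(x-y+\underline{m})$, and identify the lifted cylindrical transform with the $\mathbb{Z}^k$-periodization $\sum_{\underline{m}}g(x+\underline{m})$ of $g=\overline{D}G\ast f$. That identification is correct (modulo the interchange issues you rightly flag), and it is a more honest starting point than the paper's local identification. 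However, your final norm comparison fails.

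The gap is the step ``comparing $L^p$-norms cell-by-cell via the triangle inequality.'' What the triangle inequality gives is
\begin{equation*}
\Bigl\|\sum_{\underline{m}}g(\cdot+\underline{m})\Bigr\|_{L^p(V_1)}\;\le\;\sum_{\underline{m}}\|g\|_{L^p(V_1+\underline{m})},
\end{equation*}
and the right-hand side is the $\ell^1$-norm of the sequence $\bigl(\|g\|_{L^p(V_1+\underline{m})}\bigr)_{\underline{m}}$, whereas $\|g\|_{L^p(\RN)}$ is its $\ell^p$-norm. For $p>1$ the comparison goes the wrong way: one has $\ell^p$-norm $\le$ $\ell^1$-norm, so $\sum_{\underline{m}}\|g\|_{L^p(V_1+\underline{m})}$ is \emph{not} controlled by $\|g\|_{L^p(\RN)}$ and can diverge even for $g\in L^p(\RN)$; translation invariance does not help. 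To salvage your periodization route you would need an actual transference argument: isolate the $\underline{m}=0$ term, which is exactly the Euclidean operator and carries the constant $(n+1)(p^*-1)$, and show that the remaining kernel $\sum_{\underline{m}\ne 0}\overline{D}G(x-y+\underline{m})$ is a bounded, non-singular kernel on the compact cell (using that $\overline{D}G$ is homogeneous of degree $-(n+1)$ and $k<n-1$), hence an $L^p$-bounded perturbation. That establishes $L^p$-boundedness of $\Pi_l$ but only with an additive correction to the constant, so it does not by itself reproduce the bare bound $(n+1)(p^*-1)$ asserted in the theorem; as written, your argument does not close.
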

 which shows $\|\Pi_l\|_{L^p(C_k)}\leq (n+1)(p^*-1)$, where $p^*=max(p, p/(p-1))$.
\subsection{$\Pi$-operators on Hopf Manifolds}
A Hopf manifold is diffeomorphic to the conformally flat spin manifold $U/\Gamma=\mathbb{S}^1\times \SN$, where $U=\RN\setminus\{0\}$ and $\Gamma=\{2^k:k\in Z\}$. There exists a projection $p_k:\RN\setminus\{0\}\longrightarrow \mathbb{S}^1\times \SN$, such that $p_k(2^kx)=x'$.
\par
Let $V\subseteq \RN$ be open, and if $x\in V$, $2^kx\in V$. Hence $p_k(V)=V'\subseteq \mathbb{S}^1\times \SN$, which is also open. A left Clifford holomorphic functions $f: V\longrightarrow \ClN$ which satisfying $f(x)=f(2^kx)$ can be lifted to a well defined function $f':V'\longrightarrow \ClN$ by the projective map $p_k$, where $f'(x')=f(x)$ for each $x'\in V'$ and $x$ is one of $p_k^{-1}(x')$.
\par
The spinor bundle $E$ over $\mathbb{S}^1\times \SN$ is constructed by identifying $(x, X)$ with $(2^kx, X)$ for $k\in Z$ and $x\in \RN\setminus\{0\}$, $X\in \ClN$. By \cite{KR1}, the Cauchy kernel for $\mathbb{S}^1\times \SN$ is given as follows. Let $C(x-y)=C_1(x-y)+2^{2-2n}C_2(x-y)$, where
\begin{eqnarray*}
C_1(x-y)=\sum_{k=0}^\infty G(2^kx-2^ky),\quad
C_2(x-y)=G(x)\sum_{k=-1}^{-\infty} G(2^{-k}x^{-1}-2^{-k}y^{-1})G(y).
\end{eqnarray*}
$G(x,y)=\frac{\overline{x-y}}{||x-y||^n}$ is the fundamental solution of the Euclidean Dirac operator. After applying the projective map, we obtain the Cauchy kernel $C'(x'-y')$ for the Dirac operator on $(\mathbb{S}^1\times \SN)\times(\mathbb{S}^1\times \SN)\setminus diagonal(\mathbb{S}^1\times \SN)$, which is denoted as $D'$.  A function $f'$ defined on $V'\subseteq \mathbb{S}^1\times \SN$ is left monogenic if $D'f'=0$.
\par
Using the kernel of the Dirac operators $D'$, we can define the Cauchy transform on $S^1\times \SN$. If $f':V'\longrightarrow E$, $S'$ is a surface lying in $V'$ and bounding a subdomain $W'$. Suppose $x'\in W'$,
\begin{eqnarray*}
T_{V'} f'(x')=\frac{1}{\omega_{n+1}}\int_{V'} C(x'-y')f'(y')dy',\quad
\overline{T_{V'}} f'(x')=\frac{1}{\omega_{n+1}}\int_{V'} \overline{C(x'-y')}f'(y')dy'.
\end{eqnarray*}
Also, a non-singular boundary integral operator and its conjugate are given by
\begin{eqnarray*}
F_{S'}f'(x')=\frac{1}{\omega_{n+1}}\int_{S'}C(x'-y')dp(n(y'))f'(y')d\sigma'(y'),\quad
\overline{F_{S'}}f'(x')=\frac{1}{\omega_{n+1}}\int_{S'}\overline{C(x'-y')}dp(n(y'))f'(y')d\sigma'(y').
\end{eqnarray*}
And the Borel-Pompeiu formula is stated as follows.
\begin{theorem} \cite{KR1} For $f'\in C^1(V',\ClN)\cap C(\overline {V'})$, we have
\begin{eqnarray*}
f'(x')=\frac{1}{\omega_{n+1}}\big(\int_{S'}C(x'-y')dp(n(y))f'(y')d\sigma'(y')+\int_{V'} C(x'-y')D_lf'(y')dy'\big).
\end{eqnarray*}
\end{theorem}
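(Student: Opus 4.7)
The plan is to mirror the proof of the classical Borel--Pompeiu formula in Euclidean space, exploiting the fact that the kernel $C(x'-y')$ is constructed precisely to be the fundamental solution of the Dirac operator $D'$ on the Hopf manifold $\mathbb{S}^1\times\SN$. Two complementary strategies are available: one can lift to the universal cover $\RN\setminus\{0\}$ and reduce to the Euclidean case via the classical formula applied to a fundamental domain of the $\Gamma$-action, or one can argue intrinsically via Stokes' theorem and a small-ball removal. I will outline the intrinsic approach, since it most closely parallels the proofs already established for $\RN$ and $\SN$ in this paper.

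First, I would verify that $C(x'-y')$ is indeed the fundamental solution of $D'$. The series $C_1$ accounts for the orbit of the singularity of $G$ accumulating at the origin, while the $2^{2-2n}C_2$ term, which employs the Kelvin inversion $x\mapsto x^{-1}$, accounts for the orbit accumulating at infinity; together they produce a $\Gamma$-invariant kernel on $\RN\setminus\{0\}$ that descends to $\mathbb{S}^1\times\SN$. Near the diagonal only the $k=0$ term of $C_1$ is singular, and its local behavior matches the Euclidean kernel $G(x-y)$ exactly; the remaining terms assemble into a function smooth across the diagonal.

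Next, fix $x'\in V'$ and remove a small geodesic ball $B_\epsilon(x')$. On the relatively compact domain $V'\setminus B_\epsilon(x')$ the integrand is of class $C^1$, and the Clifford-analytic product rule combined with the right monogenicity $C(x'-y')D'_{y'}=0$ away from the diagonal yields, via the Clifford version of Stokes' theorem,
\be
\int_{\partial(V'\setminus B_\epsilon(x'))} C(x'-y')\,dp(n(y'))\,f'(y')\,d\sigma'(y') = \int_{V'\setminus B_\epsilon(x')} C(x'-y')\,D'f'(y')\,dy'.
\ee
The boundary decomposes as $S'$ and $-\partial B_\epsilon(x')$, the minus sign reflecting outward orientation.

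Finally, letting $\epsilon\to 0$, the volume integral converges to $\int_{V'}C(x'-y')\,D'f'(y')\,dy'$, while the boundary integral over $\partial B_\epsilon(x')$ reproduces $\omega_{n+1}f'(x')$: only the singular $k=0$ term of $C_1$ contributes in the limit, by continuity of $f'$ and the standard solid-angle computation on $\SN$. Rearranging yields the stated formula. The principal technical obstacle is justifying uniform convergence of the series defining $C_1$ and $C_2$ on compact subsets of $(\mathbb{S}^1\times\SN)\times(\mathbb{S}^1\times\SN)\setminus\mathrm{diag}$, so that termwise differentiation, integration, and Stokes' theorem are all legitimate; this follows from the decay $\|G(x)\|=O(\|x\|^{-n})$ combined with geometric-series bounds in $2^k\|x-y\|$ and $2^{-k}\|x^{-1}-y^{-1}\|$, as carried out in \cite{KR1}.
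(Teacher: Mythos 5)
The paper does not actually prove this theorem: it is stated with a bare citation to \cite{KR1}, so your argument is necessarily a reconstruction rather than a parallel of anything in the text. Your intrinsic route (small-ball removal plus the Clifford--Stokes formula, using that $C(x'-y')$ is right monogenic in $y'$ off the diagonal and has the Euclidean singularity $G(x-y)$ there) is the standard and correct way to establish Borel--Pompeiu on such a quotient, and it is essentially how the formula is obtained in \cite{KR1}; the alternative you mention, lifting to a fundamental domain in $\RN\setminus\{0\}$ and invoking the Euclidean formula, is what the paper itself does informally for the cylinder case. Your sketch correctly identifies the two real technical burdens (locally uniform convergence of $C_1$, $C_2$ off the diagonal, and isolating the diagonal singularity), and correctly reads the paper's $D_l$ in the volume integral as a typo for $D'$.

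One caveat you should make explicit: your claim that ``near the diagonal only the $k=0$ term of $C_1$ is singular'' is false for the kernel as literally printed in the paper. Since $G$ is homogeneous of degree $-n$, every summand $G(2^kx-2^ky)=G(2^k(x-y))=2^{-kn}G(x-y)$ is singular precisely on the diagonal, and the printed series collapses to $(1-2^{-n})^{-1}G(x-y)$, which is not a $\Gamma$-automorphic kernel at all. The kernel intended in \cite{KR1} is the orbit sum of the form $\sum_k G(2^kx-y)$ (with the Kelvin-inversion correction for the tail at infinity), for which your statement is true: in a fundamental annulus the $k\neq 0$ terms are smooth across the diagonal, only the $k=0$ term contributes the solid-angle limit $\omega_{n+1}f'(x')$, and the normalization in the theorem comes out right. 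Your proof is therefore a proof of the corrected statement; as written against the paper's literal definition of $C_1$, the step identifying the residue at the diagonal would not go through without first repairing the kernel.
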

\begin{definition}
Define the $\Pi$-operator on the Hopf manifold as $$\Pi'f'=\overline{D'}Tf'.$$
\end{definition}
Since $\Pi'$ is induced from the $\Pi$-operator in Euclidean space, we expect similar results as in \cite{GK}.
\begin{theorem}
$\Pi'$ is an $L^2$ isometry operator.
\end{theorem}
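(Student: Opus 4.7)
The plan is to realise $\Pi'=\overline{D'}T$ as a concrete instance of the abstract $\Pi$-operator $D^{*}G$ built in Section 3, and then invoke the general isometry theorem established there. On the Clifford-Hilbert module $L^{2}(\mathbb{S}^{1}\times\mathbb{S}^{n},\ClN)$ with the sesquilinear pairing $\langle u,v\rangle=\int_{V'}\overline{u}\,v\,d\eta$, take the dense subspace $\mathcal{S}=C_{0}^{\infty}(V',\ClN)$ and set $D=D'$, $G=T$. It then suffices to verify the three hypotheses of the abstract framework: that $T$ is a two-sided inverse of $D'$, that $\overline{D'}$ is the formal adjoint of $D'$, and that $D'$ and $\overline{D'}$ commute.

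The first hypothesis is the content of the Borel-Pompeiu formula for the Hopf manifold recorded just before the definition of $\Pi'$: on compactly supported smooth functions $D'T=TD'=I$. The third hypothesis follows by lifting to the covering $\RN\setminus\{0\}$ via $p_{k}$, where $D_{x}\overline{D_{x}}=\overline{D_{x}}D_{x}=-\Delta_{n+1}$; this descends through $p_{k}$ to give $D'\overline{D'}=\overline{D'}D'=\Delta'$, a spinorial Laplacian on $\mathbb{S}^{1}\times\mathbb{S}^{n}$. The second hypothesis, $(D')^{*}=\overline{D'}$, is verified for $f',g'\in C_{0}^{\infty}$ by lifting to compactly supported $f,g$ on a fundamental domain of $\RN\setminus\{0\}$ and applying the standard Clifford Green's identity; the boundary contributions vanish by compact support, and the Clifford conjugation converts $D_{x}$ to $\overline{D_{x}}$ on the right slot of the pairing, giving $\langle D'f',g'\rangle=\langle f',\overline{D'}g'\rangle$.

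With these three facts in hand, the proof of the abstract theorem in Section 3 applies verbatim to yield
\begin{align*}
\langle \Pi'f',\Pi'g'\rangle
=\langle \overline{D'}Tf',\overline{D'}Tg'\rangle
=\langle Tf',D'\overline{D'}Tg'\rangle
=\langle Tf',\overline{D'}D'Tg'\rangle
=\langle D'Tf',D'Tg'\rangle
=\langle f',g'\rangle,
\end{align*}
and density of $C_{0}^{\infty}(V',\ClN)$ in $L^{2}(\mathbb{S}^{1}\times\mathbb{S}^{n},\ClN)$ extends the isometry to the full module. The main technical obstacle is the adjoint identification $(D')^{*}=\overline{D'}$: because $\mathbb{S}^{1}\times\mathbb{S}^{n}$ arises as the quotient of the non-compact manifold $\RN\setminus\{0\}$ by the dilation group $\Gamma=\{2^{k}:k\in\mathbb{Z}\}$, one must be careful that the bundle identification $(x,X)\sim(2^{k}x,X)$ is compatible with the pointwise Clifford conjugation used to define $\overline{D'}$, and that the pushforward measure $\eta$ corresponds to integration over a fundamental domain when performing the integration by parts. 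This is handled, as in the cylinder case of the preceding subsection, by carrying out the calculation for representatives in a fundamental domain in the covering space and invoking $\Gamma$-invariance.
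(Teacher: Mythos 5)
Your proposal is correct and follows essentially the same route as the paper, which simply defers to Proposition 5 of \cite{GK}; that proof is exactly the Section 3 computation $\langle D^*Gf,D^*Gg\rangle=\langle Gf,DD^*Gg\rangle=\cdots=\langle f,g\rangle$ that you instantiate with $D=D'$, $G=T$. One small caution: for Dirac-type operators the formal adjoint typically carries a sign (compare the paper's own $M^*=-\overline{M}$ in Section \ref{hyperbolic}), so your identity $\langle D'f',g'\rangle=\langle f',\overline{D'}g'\rangle$ may be off by $-1$; this is harmless here because the two adjoint-transfer steps in your chain each pick up the same sign and cancel.
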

\begin{proof}
The proof is similar as for the Proposition 5 in \cite{GK}.
\end{proof}
Now, we will introduce a norm estimate for the $\Pi$-operator in this context. Let $V' \subseteq \mathbb{S}^1\times \SN$ be a bounded, simply connected domain with sufficiently smooth boundary, and $q, f': V'\longrightarrow E$,  q is a measurable function, and $f'$ is sufficiently smooth. The Beltrami equation on the Hopf manifold is as follows.
$$ D'f'=q\overline{D'}f'.$$ 
Suppose $V=\bigcup_{i=1}^\infty V_i$ is the inverse image of $V'$ under $p_k$, such that $p_k(V_i)=V'$. $f:V_i\longrightarrow\ClN$ is a piecewise continuous function with compact support, and $f$ could be induced to $f':V'\longrightarrow E$. For the $\Pi$-operator on $\RN$, we have $\|\Pi\|_{L^p(\RN)}\leq (n+1)(p^*-1)$, where $p^*=max(p, p/(p-1))$, see \cite{NW}.
\par
On each subdomain $V_i$, we have $\|\overline{D'}C*f(x)\|_{L^p(V_i)}=\|\overline{D}G*f(x)\|_{L^p(V_i)}$, hence for the domain $V=\sum_{i=1}^\infty V_i$, we have $\|\overline{D'}C*f(x)\|_{L^p(V)}=\|\overline{D}G*f(x)\|_{L^p(V)}\leq (n+1)(p^*-1)\|f(x)\|_{L^p(V)}$. Applying the projection $p_k$ on $V$, we can obtain $\|\overline{D'}C'*f(x')\|_{L^p(V')}\leq (n+1)(p^*-1)\|f(x')\|_{L^p(V¡®)}$, which shows the following.
\begin{theorem}
$\|\Pi'\|_{L^p(\mathbb{S}^1\times \SN)}\leq (n+1)(p^*-1)$, where $p^*=max(p, p/(p-1))$.
\end{theorem}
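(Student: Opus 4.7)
The plan is to reduce the inequality on the Hopf manifold to the known Euclidean bound $\|\Pi\|_{L^p(\RN)} \leq (n+1)(p^*-1)$ by pulling everything back to the universal cover $\RN\setminus\{0\}$ via the projection $p_k$. The paragraph preceding the statement already sketches this reduction; I would flesh it out in three steps.

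First, given a piecewise continuous $f':V'\to E$ with compact support on $V'\subseteq \mathbb{S}^1\times\SN$, I lift it through $p_k$ to a $\Gamma$-equivariant function $f:V\to\ClN$ on $V=p_k^{-1}(V')=\bigcup_{i=1}^\infty V_i$, where each restriction $p_k|_{V_i}$ is a diffeomorphism onto $V'$. Because $\eta$ is the pushforward of Lebesgue measure, $\|f\|_{L^p(V_i)}=\|f'\|_{L^p(V')}$ for every $i$, and $f$ is piecewise continuous with compact support on each fundamental domain.

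Second, I identify the two operators on each sheet. The Hopf Cauchy kernel $C(x-y)=C_1(x-y)+2^{2-2n}C_2(x-y)$ is the $\Gamma$-periodization of the Euclidean kernel $G(x-y)$ over the orbit $\{2^k:k\in\mathbb{Z}\}$, so for $x\in V_i$ the value $(\overline{D'}C'\ast f')(p_k(x))$ equals the Euclidean $(\overline{D}G\ast f)(x)$. Applying the Euclidean estimate $\|\overline{D}G\ast f\|_{L^p(V_i)}\le (n+1)(p^*-1)\|f\|_{L^p(V_i)}$ on a single fundamental domain then gives
\begin{align*}
\|\Pi' f'\|_{L^p(V')} = \|\overline{D}G\ast f\|_{L^p(V_i)} \le (n+1)(p^*-1)\|f\|_{L^p(V_i)} = (n+1)(p^*-1)\|f'\|_{L^p(V')}.
\end{align*}
A standard density argument from compactly supported piecewise continuous functions to all of $L^p$, together with the arbitrariness of $V'$, promotes this to the stated bound on $\|\Pi'\|_{L^p(\mathbb{S}^1\times\SN)}$.

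The main obstacle lies in the operator identification in the second step. One must justify commuting $\overline{D'}$ with the infinite sum defining $C'$, and then show that the resulting $\overline{D}G$-convolution against the $\Gamma$-equivariant $f$ reproduces the Euclidean convolution with $\overline{D}G$ on each $V_i$. The inversion-type piece $C_2(x-y)=G(x)\sum_{k\le -1} G(2^{-k}x^{-1}-2^{-k}y^{-1})G(y)$ makes this subtler than the purely translational cylinder case. I expect to need careful convergence estimates for the kernel sum on compact subsets bounded away from the origin and the diagonal, together with the conformal covariance of $G$ under inversion, to match the $C_2$-contributions with suitable Euclidean convolutions on the fundamental domains and conclude that the lifted operator coincides sheet-by-sheet with the Euclidean $\Pi$.
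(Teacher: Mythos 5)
Your proposal follows essentially the same route as the paper: lift $f'$ to the cover $V=\bigcup_i V_i$, identify $\overline{D'}C'\ast f'$ with the Euclidean $\overline{D}G\ast f$ on each fundamental domain, invoke the bound $\|\Pi\|_{L^p(\RN)}\le (n+1)(p^*-1)$ from the Euclidean theory, and project back via $p_k$. The only difference is that you explicitly flag the kernel-identification step (convergence of the orbit sum and the conformal weights in the $C_2$ inversion term) as needing justification, whereas the paper simply asserts the sheet-by-sheet equality of norms; your version is, if anything, the more honest account of where the work lies.
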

\section{A $\Pi$-Operator on the Hyperbolic upper-half Space}\label{hyperbolic}
Let $X$ be the upper-half space $\Ru$ with the hyperbolic measure. Then the Hilbert space $H=L^2(\Ru,\R)$ becomes a real Hilbert space, and $H\otimes \ClN$ is a Clifford-Hilbert module with the inner product
\begin{eqnarray*}
\langle f,g\rangle=\int_{\Omega}\overline{f}g\frac{dx^n}{x_n^{n-1}},
\end{eqnarray*}
where $\Omega$ is a subset of the upper-half space with $\overline{\Omega}$ inclosed and $f, g: \Omega\longrightarrow \ClN$. Then the $\Pi$-operator theory on the hyperbolic upper-half space is actually a special case of Section $3$, which is demonstrated as follows.
\subsection{Hyperbolic Dirac Operator}
Denote the upper-half space $\Ru=\{x_0e_0+x_1e_1\cdots+x_ne_n : x_n>0\}$. The Poincar\'{e} half-space is a Riemannian manifold $(\Ru, ds^2)$ with the Riemannian metric
$ds^2=\displaystyle \frac{(dx_0^2+dx_1^2+....+dx_n^2)}{x_n^2}.$
The Clifford algebra $\ClN$ can be expressed as $\ClN=\Cln+\Cln e_n$. So if $A\in \ClN$, there exist unique elements $B$ and $C\in \Cln$, such that $A=B+Ce_n$. This gives rise to a pair of projection maps $P$ and $Q$, where
$$P:\ClN\longrightarrow \Cln, P(A)=B,\quad Q:\ClN\longrightarrow \Cln, Q(A)=C.$$
We denote $-e_nQ(A)e_n$ by $Q'(A)\in \Cln$. The modified Dirac operator is defined as
$$Mf=D_0f+\displaystyle\frac{n-1}{x_n}Q'f,$$
where $D_0=\sum_{i=0}^ne_i\partial_{x_i}$ is the Dirac operator on $\RN$. Let $\Omega\subset\Ru$, we say a function $f:\Omega\longrightarrow\ClN$ is \emph{hypermonogenic} if $Mf(x)=0$ for each $x\in \Omega$.
\par
The conjugate of the modified Dirac operator is defined by
$\overline{M}f=\overline{D_0}f-\frac{n-1}{x_n}Q'f,$
where $\overline{D_0}=e_0\partial_{x_0}-\sum_{i=i}^ne_i\partial_{x_i}$, see \cite{Qiao}. Next result shows the relation between $M$ and $\overline{M}$.
\begin{proposition}
$M^*=-\overline{M}$.
\end{proposition}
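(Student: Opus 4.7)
The plan is to prove $M^* = -\overline{M}$ by verifying $\langle Mf, g\rangle = \langle f, -\overline{M} g\rangle$ for $f, g \in C_0^\infty(\Omega, \ClN)$, a dense subspace of the hyperbolic Clifford-Hilbert module where compact support makes all boundary terms from integration by parts vanish. Equivalently, I would prove the single identity
$$\int_\Omega \bigl(\overline{Mf}\cdot g + \overline{f}\cdot \overline{M} g\bigr)\, x_n^{-(n-1)}\, dx = 0.$$

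Splitting $M = D_0 + \frac{n-1}{x_n}Q'$ and $\overline{M} = \overline{D_0} - \frac{n-1}{x_n}Q'$, I would treat the differential and multiplication pieces separately. For the $D_0$ piece, integration by parts in each coordinate $x_i$, together with the Clifford conjugation identities $\overline{e_0} = e_0$ and $\overline{e_i} = -e_i$ for $i \geq 1$, converts $D_0$ into $\overline{D_0}$ acting on $g$ when the sign flips are absorbed. The delicate case is $i = n$: differentiating the hyperbolic weight yields the extra factor
$$-\partial_{x_n}\bigl(x_n^{-(n-1)}\bigr) = \frac{n-1}{x_n}\, x_n^{-(n-1)},$$
which appears with an inserted $e_n$ coming from the $e_n\partial_{x_n}$ summand of $D_0$. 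The multiplication piece $\frac{n-1}{x_n}Q'$ contributes its own term on the left via the computation of $\overline{Q'f}$, and the sum of these contributions must match $-\frac{n-1}{x_n}Q'g$ appearing in $-\overline{M}g$ on the right.

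The main obstacle is the algebraic cancellation at the $Q'$ level. Using the decomposition $\ClN = \Cln \oplus \Cln e_n$, I would write every $\ClN$-valued expression in its $P$- and $Q$-components and verify that $-e_n (\cdot) e_n$, acting on the piece of the product into which a stray $e_n$ has been inserted by the weight-correction step, reproduces precisely the twist $Q'$ applied to the appropriate argument. In other words, the specific coefficient $\frac{n-1}{x_n}$ and the twist $Q'$ in the definition of $M$ are designed exactly to make the weight-correction term and the multiplication-operator adjoint contribution cancel in the combined bilinear form; confirming this component-by-component identity is the heart of the proof, after which the assertion $M^* = -\overline{M}$ follows by density.
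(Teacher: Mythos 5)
Your proposal follows essentially the same route as the paper's proof: integrate by parts against the hyperbolic weight $x_n^{-(n-1)}$ so that the $e_n\partial_{x_n}$ term produces the extra $\frac{n-1}{x_n}e_n$ contribution, compute the adjoint of the multiplication operator $\frac{n-1}{x_n}Q'$ separately, and then use the orthogonal splitting $\mathcal{C}l_n=\mathcal{C}l_{n-1}\oplus\mathcal{C}l_{n-1}e_n$ (the paper's relation $\int_\Omega\overline{P(f)}\cdot(Q(g)e_n)\,x_n^{-(n-1)}dx=0$) to show the leftover terms combine into exactly $\langle f,\frac{n-1}{x_n}Q'g\rangle$. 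The component-by-component verification you defer is precisely the short $Pf$/$Qfe_n$ bookkeeping the paper carries out, so the plan is sound and matches the published argument.
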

\begin{proof}
Let $f,g \in L^2(\Ru,\ClN)$ with compact support. From the decomposition that $A=P(A)+Q(A)e_n$, we notice that $||f||^2_h=||Pf||^2_h+||Qf||^2_h$, where
$$||f||^2_h=\int_\Omega \overline{f(x)}f(x)\frac{dx^n}{x_n^{n-1}}$$
  defines the norm of $f$ in the upper-half space with hyperbolic metric. If we replace $f$ that in the previous identity with $f+g$, one can easily see that $P(f)$ is orthogonal to $Q(g)e_n$. More specifically,
 \begin{eqnarray}\label{ortho}
 \int_\Omega \overline{P(f)}\cdot (Q(g)e_n)\frac{dx^n}{x_n^{n-1}}=0.
 \end{eqnarray}
On one hand, since we have
$$\langle Mf,g\rangle=\langle\sum_{i=0}^ne_i\frac{\partial f}{\partial x_i}+\frac{n-1}{x_n}Q'f,g\rangle=\langle\sum_{i=0}^ne_i\frac{\partial f}{\partial x_i}-\frac{n-1}{x_n}e_nQfe_n,g\rangle,$$
then
\begin{align*}
&\langle\sum_{i=0}^ne_i\frac{\partial f}{\partial x_i},g\rangle=\int_\Omega\overline{\sum_{i=0}^ne_i\frac{\partial f}{\partial x_i}}\cdot g\frac{dx^n}{x_n^{n-1}}
=\int_\Omega\overline{\sum_{i=0}^n\frac{\partial f}{\partial x_i}}\cdot\overline{e_i}g\frac{dx^n}{x_n^{n-1}}
=-\int_\Omega \overline{f}\cdot\sum_{i=0}^n\frac{\partial}{\partial x_i}(\overline{e_i}g)\frac{dx^n}{x_n^{n-1}}\\
=&-\int_\Omega \overline{f} \big(\sum_{i=0}^n \overline{e_i}\frac{\partial g}{\partial x_i}\frac{dx^n}{x_n^{n-1}}\big)-\int_\Omega \overline{f} \overline{e_n}g\frac{-(n-1)}{x_n^n}dx^n   
=\langle f, -\overline{D_0}g\rangle-(n-1)\int_\Omega \overline{f}\cdot e_ng\frac{dx^n}{x_n^n}.
\end{align*}
On the other hand,
\begin{eqnarray*}
\langle-\frac{n-1}{x_n}e_nQfe_n,g\rangle=-(n-1)\int_\Omega\overline{e_nQfe_n}g\frac{dx^n}{x_n^{n}}
=(n-1)\int_\Omega \overline{Qfe_n}\cdot e_n g\frac{dx^n}{x_n^{n}}.
\end{eqnarray*}
Hence,
\begin{align*}
&\langle Mf,g\rangle=\langle\sum_{i=0}^ne_i\frac{\partial f}{\partial x_i}-\frac{n-1}{x_n}e_nQfe_n,g\rangle
=\langle f, -\overline{D_0}g\rangle-(n-1)\int_\Omega \overline{f}\cdot e_ng\frac{dx^n}{x_n^n}+(n-1)\int_\Omega \overline{Qfe_n}\cdot e_n g\frac{dx^n}{x_n^{n}}\\
=&\langle f, -\overline{D_0}g\rangle-(n-1)\int_\Omega \overline{Pf}\cdot e_ng\frac{dx^n}{x_n^n}
=\langle f, -\overline{D_0}g\rangle-(n-1)\int_\Omega \overline{Pf}\cdot e_n(Pg+Qge_n)\frac{dx^n}{x_n^n}.
\end{align*}
Since $e_nPg$ can be rewritten as $\pm Pge_n$, where $``\pm"$ depends on whether $n$ is even or odd. This can also be considered as $Qhe_n$ for some function $h\in L^2(\Ru,\ClN)$. Hence, from (\ref{ortho}), we can see that $Pf$ is orthogonal to $e_nPg$. Thus, the previous equation becomes
\begin{eqnarray*}
\langle f, -\overline{D_0}g\rangle-(n-1)\int_\Omega \overline{Pf}\cdot e_nQge_n\frac{dx^n}{x_n^n}.
\end{eqnarray*}
With a similar argument as above, the previous equation is equal to
\begin{align*}
=&\langle f, -\overline{D_0}g\rangle-(n-1)\int_\Omega \overline{Pf+Qfe_n}\cdot e_nQge_n\frac{dx^n}{x_n^n}
=\langle f, -\overline{D_0}g\rangle-(n-1)\int_\Omega \overline{f}\cdot e_nQge_n\frac{dx^n}{x_n^n}\\
=&\langle f,-\overline{D_0}g+\frac{n-1}{x_n}Q'g\rangle=\langle f,-\overline{M}g\rangle.
\end{align*}
Therefore, $M^*=-\overline{D_0}+\frac{n-1}{x_n}Q'=-\overline{M}$. Similarly, $\overline{M}^*=-M$.
\end{proof}
By a straight forward calculation, we can obtain
\begin{eqnarray*}
M\overline{M}f=\overline{M}Mf=\Delta f-\frac{n-1}{x_n}\frac{\partial}{\partial_{x_n}}f+(n-1)\frac{Qfe_n}{x_n^2},
\end{eqnarray*}
where $\Delta$ is the Laplace operator in $\RN$.
In the hyperbolic function theory, we define hyperbolic harmonic function $f:\Omega\longrightarrow\ClN$ as a solution of the equation
\begin{align*}
\overline{M}Mf(x)=0,\quad \text{for}\ x\in \Omega.
\end{align*}
Let
\begin{eqnarray*}
E(x,y)=\displaystyle \frac{(x-y)^{-1}}{\|x-y\|^{n-1}\|x-\widehat{y}\|^{n-1}},\
F(x,y)=\displaystyle \frac{(\widehat{x}-y)^{-1}}{\|x-y\|^{n-1}\|\widehat{x}-y\|^{n-1}},
\end{eqnarray*}
where $\widehat{x}=\sum_{i=0}^{n-1}x_ie_i-x_ne_n$. The Cauchy transform is defined as \cite{EO}
\begin{eqnarray*}
T_\Omega f(y)=-\displaystyle \frac{2^{n-1}y_n^{n-1}}{\omega_{n+1}}\int_\Omega \big(E(x,y)f(x)-F(x,y)\widehat{f(x)}\big)dx^n,
\end{eqnarray*}
where $\widehat{f}=\sum_{i=0}^{n-1}f_ie_i-f_ne_n$. Also, a non-singular boundary integral operator is given by
\begin{eqnarray*}
F_{\partial \Omega}f(y)=\displaystyle \frac{2^{n-1}y_n^{n-1}}{\omega_{n+1}}\int_{\partial \Omega}\big(E(x,y)n(x)f(x)-F(x,y)\widehat{n}(x)\widehat{f}(x)\big)d\sigma(x).
\end{eqnarray*}
Hence, we have a Borel-Pompeiu formula as follows.
\begin{theorem}\cite{EO}
Let $\Omega\subseteq \Ru$ be a bounded region with smooth boundary in $\Ru$. Suppose $f:\Omega\longrightarrow \ClN$ is a $C^1$ function on $\Omega$ with a continuous extension to the closure of $\Omega$. Then for $y\in \Omega$, we have
\begin{eqnarray*}
f(y)=\displaystyle \frac{2^{n-1}y_n^{n-1}}{\omega_{n+1}}\int_{\partial \Omega}\big(E(x,y)n(x)f(x)-F(x,y)\widehat{n}(x)\widehat{f}(x)\big)d\sigma(x)
-\displaystyle \frac{2^{n-1}y_n^{n-1}}{\omega_{n+1}}\int_\Omega \big(E(x,y)Mf(x)-F(x,y)\widehat{Mf(x)}\big)dx^n.
\end{eqnarray*}
\end{theorem}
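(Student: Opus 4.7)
The plan is to adapt the classical Borel--Pompeiu argument in Clifford analysis to the modified Dirac operator $M$ on the Poincar\'e upper-half space. The proof has three stages: verify a Green's-type pointwise identity that makes the combination $E(x,y)f(x)-F(x,y)\widehat{f(x)}$ behave correctly under $M$, apply the Euclidean divergence theorem on the perforated domain $\Omega\setminus\overline{B(y,\varepsilon)}$, and take $\varepsilon\to 0^+$ to extract the value $f(y)$.

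First I would analyse the kernels. Because $y\in\Ru$, its reflection $\widehat y$ lies in the lower half space, so $F(x,y)$ is smooth on all of $\overline{\Omega}$, and only $E(x,y)$ has a singularity, located at $x=y$. The reflected term $F\widehat f$ is present specifically to cancel the lower-order correction $\frac{n-1}{x_n}Q'$ in $M$: using the decomposition $A=P(A)+Q(A)e_n$ together with $\widehat{e_i}=e_i$ for $i<n$ and $\widehat{e_n}=-e_n$, a direct computation on $\Omega\setminus\{y\}$ should establish a Green's-type identity whose divergence has $1/x_n$ contributions from the two halves cancelling exactly, leaving a volume term that collapses to $E(x,y)\,Mf(x)-F(x,y)\,\widehat{Mf(x)}$.

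Next I would apply the Euclidean divergence theorem to this identity on the perforated domain $\Omega_\varepsilon=\Omega\setminus\overline{B(y,\varepsilon)}$, retaining throughout the prefactor $\frac{2^{n-1}y_n^{n-1}}{\omega_{n+1}}$. The boundary $\partial\Omega_\varepsilon$ splits as $\partial\Omega\cup\partial B(y,\varepsilon)$, so the surface integral over $\partial\Omega$ reproduces the $F_{\partial\Omega}$-type term in the statement, while the volume integral reproduces, up to sign, the second integral in the statement.

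The main obstacle is the singular limit on $\partial B(y,\varepsilon)$. Since $F$ is bounded near $y$, only $E$ contributes as $\varepsilon\to 0^+$. Parametrising $x=y+\varepsilon\omega$ with $\omega\in\SN$, the outward normal of $\Omega_\varepsilon$ is $-\omega$ and $\|x-\widehat y\|\to 2y_n$; a direct asymptotic computation should give
$$\lim_{\varepsilon\to 0^+}\frac{2^{n-1}y_n^{n-1}}{\omega_{n+1}}\int_{\partial B(y,\varepsilon)}E(x,y)\,n(x)\,f(x)\,d\sigma(x)=f(y),$$
the prefactor $2^{n-1}y_n^{n-1}/\omega_{n+1}$ being calibrated precisely to absorb $(2y_n)^{n-1}$ from the denominator of $E$ together with the surface area $\omega_{n+1}$ of $\SN$. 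Collecting all pieces yields the claimed Borel--Pompeiu formula. The most delicate book-keeping throughout is the consistent tracking of the involution $\widehat{\cdot}$ and the projection $Q'$ through the Clifford algebra; once the Green's-type identity of step one is verified, the remainder is a standard perforated-domain Stokes argument.
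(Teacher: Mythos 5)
The paper does not actually prove this theorem: it is quoted verbatim from Eriksson--Orelma \cite{EO}, so there is no in-paper argument to measure you against. Your skeleton --- a Green/Stokes identity for the pair $E(x,y)f(x)-F(x,y)\widehat{f(x)}$, the divergence theorem on $\Omega\setminus\overline{B(y,\varepsilon)}$, and the limit $\varepsilon\to 0^+$ extracting $f(y)$ from the $E$-term --- is indeed the route taken in \cite{EO}, and your identification of the role of the reflected term $F\widehat f$ (to absorb the non-derivative correction $\frac{n-1}{x_n}Q'$ in $M$) is the right structural insight. However, as written the proposal has a genuine gap: the entire mathematical content of the theorem lives in the Green's-type identity of your first stage, and you only assert that a ``direct computation should establish'' it. That computation is where the specific forms of $E$ and $F$, the involution $\widehat{\ \cdot\ }$, the weight $x_n^{n-1}$, and the $P$/$Q$ decomposition must all interlock; nothing in your outline verifies that the $1/x_n$ contributions actually cancel, nor that the surviving volume density is exactly $E\,Mf-F\,\widehat{Mf}$ rather than some other combination. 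Until that identity is written out and checked, stages two and three are just the standard perforated-domain scaffolding.

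There is also a factual error in your treatment of $F$. You claim $F(x,y)$ is smooth on $\overline\Omega$ (and later ``bounded near $y$'') because $\widehat y$ lies in the lower half-space. But $F(x,y)=\dfrac{(\widehat x-y)^{-1}}{\|x-y\|^{n-1}\,\|\widehat x-y\|^{n-1}}$ carries the factor $\|x-y\|^{n-1}$ in its denominator, so it blows up like $\|x-y\|^{-(n-1)}$ at $x=y$. Your conclusion survives for a different reason: this singularity is of order $n-1$ in the $(n+1)$-dimensional ambient space, hence locally integrable (so the volume integral converges) and its contribution over $\partial B(y,\varepsilon)$ is $O(\varepsilon^{-(n-1)}\cdot\varepsilon^{n})=O(\varepsilon)\to 0$, so only $E$ feeds the point evaluation. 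You should replace the boundedness claim with this order-of-singularity estimate. The asymptotics you give for the $E$-term (with $\|x-\widehat y\|\to 2y_n$ calibrating the prefactor $2^{n-1}y_n^{n-1}/\omega_{n+1}$) are correct in spirit, though the sign bookkeeping with $n(x)=-\omega$ and $\omega^{2}=-1$ needs to be done carefully against the orientation convention used in the Stokes identity.
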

\begin{remark}
We notice that when $f$ is a hypermonogenic function, we have
\begin{eqnarray*}
f(y)=\displaystyle \frac{2^{n-1}y_n^{n-1}}{\omega_{n+1}}\int_{\partial \Omega}\big(E(x,y)n(x)f(x)-F(x,y)\widehat{n}(x)\widehat{f}(x)\big)d\sigma(x).
\end{eqnarray*}
Further, if $f\in \wzwo(\Omega,{\mathcal{C}l_n})$, then
\begin{eqnarray*}
f(y)=-\displaystyle \frac{2^{n-1}y_n^{n-1}}{\omega_{n+1}}\int_\Omega \big(E(x,y)Mf(x)-F(x,y)\widehat{Mf(x)}\big)dx^n,
\end{eqnarray*}
in other words, $TM=I$. If we apply the hyperbolic Dirac operator $M$ on both sides of the equation, we can easily obtain $MT=I$.
\end{remark}
\subsection{Construction of the Hyperbolic $\Pi$-Operator}
The generalization of $\Pi$-operator to higher dimension via Clifford algebras is defined as follows.
\begin{definition}
The hyperbolic $\Pi$-operator in $\Ru$ is defined as
$\Pi_h=\overline{M}T.$
\end{definition}
The following are some well known properties for the $\Pi_h$-operator.
\begin{theorem}
Suppose $f\in \wzwop(\Omega)\ (1<p<\infty, k\geq 1)$, then
\begin{enumerate}
\item $M\Pi_h f=\overline{M}f$, $\Pi_h Mf=\overline{M}f-\overline{M}F_{\partial\Omega}f,$
\item $F_{\partial\Omega}\Pi_h f=(\Pi_h-T\overline{M})f$, $M\Pi_h f-\Pi_h Mf=\overline{M}F_{\partial\Omega}f.$
\end{enumerate}
\end{theorem}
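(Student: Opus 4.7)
The plan is to reduce all four identities to three facts already established in the excerpt: (a) the commutation $M\overline{M}=\overline{M}M$ (which follows from the explicit second-order formula for $M\overline{M}f$ computed just before the definition of hyperbolic harmonic functions); (b) the inversion relations $MT=I=TM$ on compactly supported / $\wzwo$-data; and (c) the Borel--Pompeiu decomposition $f=F_{\partial\Omega}f+TMf$. Nothing beyond these tools should be needed.

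First I would verify identity (1a). Writing $M\Pi_h f=M\overline{M}Tf$ and using (a) gives $\overline{M}MTf$, and then $MT=I$ on functions in $\wzwop(\Omega)$ (which sit inside the class where the Borel--Pompeiu formula without boundary term applies, since $\wzwop\subset\wzwo$ for $k\geq 1$) yields $\overline{M}f$. Next, for identity (1b), compute $\Pi_h Mf=\overline{M}TMf$ and insert the full Borel--Pompeiu formula $TMf=f-F_{\partial\Omega}f$, which after applying $\overline{M}$ gives exactly $\overline{M}f-\overline{M}F_{\partial\Omega}f$.

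Having (1), the two identities in (2) follow essentially for free. For (2a) I would apply the Borel--Pompeiu formula to $\Pi_h f$ itself, writing $\Pi_h f=F_{\partial\Omega}\Pi_h f+TM\Pi_h f$; substituting the result of (1a), namely $M\Pi_h f=\overline{M}f$, the middle term becomes $T\overline{M}f$, and rearranging produces $F_{\partial\Omega}\Pi_h f=(\Pi_h-T\overline{M})f$. For (2b) I would simply subtract (1b) from (1a), giving $M\Pi_h f-\Pi_h Mf=\overline{M}f-(\overline{M}f-\overline{M}F_{\partial\Omega}f)=\overline{M}F_{\partial\Omega}f$.

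The one place I would slow down and double-check is step (a): confirming that the identity $M\overline{M}=\overline{M}M$ used to commute $M$ past $\overline{M}$ in the first step really follows from the second-order formula already derived, and that when $T$ is interposed, the required regularity for passing $M$ inside the convolution is available for $f\in\wzwop(\Omega)$ with $k\geq 1$ (since this is where the argument truly relies on smoothness and the boundary behaviour implicit in $\wzwop$). Once that justification is in place, every other step is an algebraic rearrangement using $MT=TM=I$ and Borel--Pompeiu, so the proof is essentially mechanical.
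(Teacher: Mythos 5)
Your argument is correct and is precisely the ``straightforward calculation'' the paper invokes without writing out: commute $M$ past $\overline{M}$ using $M\overline{M}=\overline{M}M$, apply $MT=I$, and use the Borel--Pompeiu identity $f=F_{\partial\Omega}f+TMf$ (applied once to $f$ for (1b) and once to $\Pi_h f$ for (2a)), with (2b) following by subtraction. The only cosmetic remark is that for $f\in\wzwop(\Omega)$ the boundary term $F_{\partial\Omega}f$ actually vanishes, so (1b) collapses to $\Pi_h Mf=\overline{M}f$; your proof via the full Borel--Pompeiu formula is nonetheless valid and matches the intended derivation.
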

\begin{proof}
The proof is a straight forward calculation.
\end{proof}
The following decomposition of $L^2(\Omega,\ClN)$ helps us to observe that the $\Pi$-operator actually maps $L^2(\Omega,\ClN)$ to $L^2(\Omega,\ClN)$.
\begin{theorem}  \text{(\textbf{Decomposition of $L^2(\Omega,\ClN$)})}
$$L^2(\Omega,\ClN)=L^2(\Omega,\ClN)\ \cap\ Ker \overline{M}\oplus M(\wzwo(\Omega, \ClN)),$$
and
$$L^2(\Omega,\ClN)=L^2(\Omega,\ClN)\ \cap\ Ker M\oplus\overline{M}(\wzwo(\Omega, \ClN)).$$
\end{theorem}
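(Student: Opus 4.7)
The plan is to view each of the two identities as the standard orthogonal splitting $H=\ker(A^{*})\oplus\overline{\operatorname{Range}(A)}$ applied in the Hilbert module $L^{2}(\Omega,\ClN)$, first with $A=M$ and then with $A=\overline{M}$, and then to upgrade the closure to a genuine direct sum by showing that $M(\wzwo(\Omega,\ClN))$ (resp.\ $\overline{M}(\wzwo(\Omega,\ClN))$) is already norm-closed in $L^{2}$.

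For the orthogonal step, I would exploit the adjoint identity $M^{*}=-\overline{M}$ just proved. If $g\in L^{2}(\Omega,\ClN)$, then $g\perp M(\wzwo(\Omega,\ClN))$ iff $\langle Mf,g\rangle=0$ for every test spinor $f\in C_{0}^{\infty}(\Omega,\ClN)$ (extended to $\wzwo$ by density), iff $\langle f,-\overline{M}g\rangle=0$ for every such $f$, iff $\overline{M}g=0$ distributionally on $\Omega$, i.e.\ $g\in L^{2}(\Omega,\ClN)\cap\ker\overline{M}$. This gives
\[
L^{2}(\Omega,\ClN)=\overline{M(\wzwo(\Omega,\ClN))}\ \oplus\ \bigl(L^{2}(\Omega,\ClN)\cap\ker\overline{M}\bigr).
\]

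To remove the closure I would invoke the Borel-Pompeiu remark of the previous subsection, which supplies $TM=I$ on $\wzwo(\Omega,\ClN)$ and $MT=I$ on the appropriate domain. Concretely, if $Mf_{n}\to g$ in $L^{2}$ with $f_{n}\in\wzwo$, applying $T$ and using its boundedness gives $f_{n}=TMf_{n}\to Tg$ in $\wzwo$, so $g=MTg\in M(\wzwo(\Omega,\ClN))$; this is the desired closedness. The second identity follows by an entirely symmetric argument, applied to $\overline{M}$ with $\overline{M}^{*}=-M$ (noted at the end of the adjoint proposition) and the analogous Borel-Pompeiu pair $T\overline{M}=I$, $\overline{M}\,T=I$.

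The main obstacle I anticipate is the regularity/mapping step: one must verify that the hyperbolic Cauchy transform $T$ gains one derivative, so that it defines a bounded operator from $L^{2}(\Omega,\ClN)$ into $\wzwo(\Omega,\ClN)$, and that the formal identities $TM=I$ and $MT=I$ extend from the $C_{0}^{\infty}$ setting to the full $\wzwo$ (respectively $L^{2}\cap\ker\overline{M}$) by continuity. Once these mapping properties for $T$ and $\overline{T}$ are in hand on the hyperbolic upper-half space, the rest of the argument is the formal Hilbert module reasoning sketched above, and no calculation beyond the already-established adjoint and inversion identities is required.
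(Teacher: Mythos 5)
Your argument is correct in outline, but it is worth noting that the paper itself offers no proof here: the accompanying remark simply says the proof is ``similar to the proof of Theorem 1 in [GK]'', so the route being pointed to is the Euclidean one of G\"urlebeck--K\"ahler. That route differs from yours in how it handles the only nontrivial point, namely why $M(\wzwo(\Omega,\ClN))$ is all of the orthogonal complement and not just dense in it. In [GK] one does not prove closedness of the range at all; instead, for given $f\in L^2$ one solves the weak Dirichlet problem $\langle Mu,Mv\rangle=\langle f,Mv\rangle$ for all $v\in\wzwo(\Omega,\ClN)$ via Riesz/Lax--Milgram on $\wzwo$ equipped with the energy form $\langle M\cdot,M\cdot\rangle$ (equivalent to the $W^1_2$ norm by a Friedrichs-type inequality, using $\overline{M}M=$ the hyperbolic Laplacian-type operator computed in the paper), and then sets $f=(f-Mu)+Mu$; orthogonality of the first summand to $M(\wzwo)$ is built in, and your adjoint computation identifies it as an element of $\ker\overline{M}$. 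Your alternative — complement equals $\ker\overline{M}$ by the identity $M^*=-\overline{M}$, plus closedness of $M(\wzwo)$ via $f_n=TMf_n\to Tg$ — is also a legitimate proof, and you correctly isolate its real cost: it requires the hyperbolic Cauchy transform $T$ (built from the kernels $E$ and $F$) to be bounded from $L^2(\Omega,\ClN)$ into $W^1_2(\Omega,\ClN)$, a Calder\'on--Zygmund-type mapping property that is nowhere established in this paper, whereas the [GK] route only needs coercivity of the energy form on $\wzwo$ (unproblematic here since $\overline{\Omega}$ is compactly contained in $\Ru$, so the coefficient $\frac{n-1}{x_n}$ is bounded). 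So: same orthogonality step, genuinely different mechanism for surjectivity onto the complement; the paper's (cited) mechanism is the cheaper one in this setting.
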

\begin{remark}
The proof is similar to the proof in \cite[Theorem 1]{GK}.
Notice that
\begin{eqnarray*}
\Pi_h(L^2(\Omega,\ClN)\cap Ker \overline{M})=L^2(\Omega,\ClN)\cap Ker M,\quad
\Pi_h(M(\wzwo(\Omega, \ClN))=\overline{M}(\wzwo(\Omega, \ClN)).
\end{eqnarray*}
Hence, $\Pi_h$ maps $L^2(\Omega,\ClN)$ to $L^2(\Omega,\ClN)$.
\end{remark}
Further,  the $\Pi$-operator is an $L^2$ isometry, which can also be demonstrated as follows.
\begin{theorem}
For functions in $L^2(\Omega,\ClN)$, we have
$\Pi^* \Pi=I.$
\begin{proof}
Let $f\in L^2(\Omega, \ClN)$ with compact support,
\begin{eqnarray*}
\langle\Pi_h f, \Pi_h f\rangle=\langle\overline{M}Tf, \overline{M}Tf\rangle=-\langle Tf, M\overline{M}Tf\rangle
=-\langle Tf, \overline{M}MTf\rangle=\langle MTf, MTf\rangle=\langle f,f\rangle.
\end{eqnarray*}
Here we use $\overline{M}^*=-M$.
\end{proof}
\end{theorem}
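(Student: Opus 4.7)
The plan is to establish the isometry identity $\langle \Pi_h f, \Pi_h f\rangle = \langle f, f\rangle$ on the dense subspace of compactly supported $f \in L^2(\Omega, \ClN)$, and then extend to all of $L^2(\Omega,\ClN)$ by continuity once the operator $\Pi_h$ is known to be bounded. The statement $\Pi_h^*\Pi_h = I$ is equivalent to this isometry by polarization, so it suffices to verify the diagonal form.

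For the core calculation I would exploit three facts already in the paper. First, the duality relations $M^* = -\overline{M}$ and $\overline{M}^* = -M$ from the preceding proposition, which lets me move the hyperbolic Dirac operators across the sesquilinear pairing at the cost of a sign. Second, the identity $M\overline{M} = \overline{M}M$ (both equal the second-order operator $\Delta - \tfrac{n-1}{x_n}\partial_{x_n} + (n-1)\tfrac{Q(\cdot)e_n}{x_n^2}$ computed just after the proof of that proposition), which allows the two Dirac-type factors to be swapped without further adjustment. Third, the inverse property $MT = I$ from the Borel-Pompeiu remark, valid for functions of compact support. Stringing these together: starting from $\langle \overline{M}Tf, \overline{M}Tf\rangle$, I apply $\overline{M}^* = -M$ to move one $\overline{M}$ over, producing $-\langle Tf, M\overline{M}Tf\rangle$; rewrite $M\overline{M}$ as $\overline{M}M$; then apply $M^* = -\overline{M}$ to move the leading $M$ back over, which introduces a second minus sign and yields $\langle MTf, MTf\rangle$; finally $MT = I$ collapses this to $\langle f, f\rangle$.

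The one point requiring genuine care, though not a real obstacle, is the justification that each transposition across $\langle\cdot,\cdot\rangle$ produces no boundary term. This is precisely why I restrict initially to compactly supported $f$: the integration by parts underlying $M^* = -\overline{M}$ was verified in that setting in the previous proposition, and the compact support of $f$ also validates $MT = I$ through the Borel-Pompeiu remark. After establishing the identity on this dense class, extension to $L^2(\Omega,\ClN)$ is standard, using that $\Pi_h = \overline{M}T$ is bounded on $L^2$ (which is already indicated by the decomposition theorem of $L^2(\Omega,\ClN)$ showing $\Pi_h$ maps $L^2$ to $L^2$).
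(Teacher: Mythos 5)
Your proposal is correct and follows essentially the same route as the paper: the identical chain $\langle\overline{M}Tf,\overline{M}Tf\rangle=-\langle Tf,M\overline{M}Tf\rangle=-\langle Tf,\overline{M}MTf\rangle=\langle MTf,MTf\rangle=\langle f,f\rangle$ on compactly supported $f$, using $\overline{M}^{*}=-M$, the commutation $M\overline{M}=\overline{M}M$, and $MT=I$. Your added remarks on density, polarization, and boundedness only make explicit what the paper leaves implicit.
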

\begin{remark}
The argument for the applications of $\Pi$-operators on Beltrami equations  given in Section $3$ can also be applied here, which shows the further applications of the $\Pi$-operators in the study of the Beltrami equations in the hyperbolic case.
\end{remark}


\end{document}